\documentclass{amsart}
%%%%%%%%%%%%%%%%%%%%%%%%%%%%%%%%%%%%%%%%%%%%%%%%%%%%%%%%
\usepackage[utf8]{inputenc}
\usepackage[all]{xy}
\usepackage{tikz}
\usepackage{upgreek,amsthm,bbm,bm}
%%%%%%%%%%%%%%%%%%%%%%%%%%%%%%%%%%%%%%%%%%%%%%%%%%%%%%%%
\usepackage{amssymb,amstext,amsmath,amscd,amsthm,hyperref,tikz,tikz-cd,amsfonts,enumerate,graphicx,latexsym,color,geometry}
\usepackage[all]{xy}
\geometry{left=25mm,right=25mm,top=25mm,bottom=25mm}
\tolerance=9999
%%%%%%%%%%%%%%%%%%%%%%%%%%%%%%%%%%%%%%%%%%%%%%%%%%%%%%%%%

%%%%%%%%%%%%%%%%%%%%%%%%%%%%%%%%%%%%%%%%%%%%%%%%%%%%%%%%
\newtheorem{thm}{Theorem}[section]
\newtheorem{cor}[thm]{Corollary}
\newtheorem{lem}[thm]{Lemma}
\newtheorem{prop}[thm]{Proposition}

\newtheorem*{thm*}{Theorem}
\newtheorem*{cor*}{Corollary}
%%%%%%%%%%%%%%%%%%%%%%%%%%%%%%%%%%%%%%%%%%%%%%%%%%%%%%%%%%%%%%%%%
\theoremstyle{definition}
\newtheorem{conv}[thm]{Convention}
\newtheorem{defn}[thm]{Definition}
\newtheorem{rem}[thm]{Remark}
\newtheorem{ques}[thm]{Question}

\newtheorem*{conj*}{Conjecture}
\newtheorem{exam}[thm]{Example}

\newtheorem*{claim*}{Claim}
\newtheorem*{ques*}{Question}

%%%%%%%%%%%%%%%%%%%%%%%%%%%%%%%%%%%%%%%%%%%%%%%%%%%%%%%%%%%%%%%%%
\theoremstyle{remark}
\newtheorem*{ac}{Acknowledgments}
\numberwithin{equation}{thm}
%%%%%%%%%%%%%%%%%%%%%%%%%%%%%%%%%%%%%%%%%%%%%%%%%%%%%%%%%%%%%%%%%
\def\A{\mathcal{A}}
\def\ann{\operatorname{Ann}}
\def\Ass{\operatorname{Ass}}
\def\Assh{\operatorname{Assh}}
\def\B{\mathcal{B}}
\def\C{\mathrm{C}}
\def\c{\mathcal{E}}
\def\cmd{\operatorname{cmd}}
\def\D{\mathrm{D}}
\def\depth{\operatorname{depth}}
\def\E{\mathrm{E}}
\def\e{\mathbb{E}}
\def\Ext{\operatorname{Ext}}
\def\ge{\geqslant}
\def\Gdim{\operatorname{G-dim}}
\def\height{\operatorname{ht}}
\def\H{\mathrm{H}}
\def\Hom{\operatorname{Hom}}
\def\I{\mathrm{I}}
\def\id{\mathrm{id}}
\def\im{\mathrm{Im}}
\def\le{\leqslant}
\def\m{\mathfrak{m}}
\def\Min{\operatorname{Min}}
\def\mod{\operatorname{mod}}

\def\P{\mathrm{P}}
\def\p{\mathfrak{p}}
\def\pd{\operatorname{pd}}
\def\q{\mathfrak{q}}
\def\rhom{\operatorname{\mathbf{R}Hom}}

\def\Supp{\operatorname{Supp}}
\def\T{\mathrm{T}}
\def\t{\mathbb{T}}
\def\Tor{\operatorname{Tor}}
\def\type{\operatorname{type}}

\def\Z{\mathbb{Z}}
%%%%%%%%%%%%%%%%%%%%%%%%%%%%%%%%%%%%%%%%%%%%%%%%%%%%%%%%%%%%%%%%%
\begin{document}
%\setlength{\baselineskip}{15pt}
%%%%%%%%%%%%%%%%%%%%%%%%%%%%%%%%%%%%%%%%%%%%%%%%%%%%%%%%
\title{Characteristic modules over a local ring}
\author{Mohsen Gheibi}
\address[Mohsen Gheibi]{Department of Mathematics, Florida A{\&}M University, Tallahassee FL, USA }
\email{mohsen.gheibi@famu.edu}
\author{Ryo Takahashi}
\address[Ryo Takahashi]{Graduate School of Mathematics, Nagoya University, Furocho, Chikusaku, Nagoya 464-8602,
Japan}
\email{takahashi@math.nagoya-u.ac.jp}
\urladdr{https://www.math.nagoya-u.ac.jp/~takahashi/}
\thanks{Takahashi was partly supported by JSPS Grant-in-Aid for Scientific Research 23K03070}
\subjclass[2020]{13D05, 13D07, 13H10}
\keywords{}
\begin{abstract}
Let $R$ be a commutative noetherian local ring, and let $M$ be a finitely generated $R$-module. Inspired by works of Vasconcelos and Briggs on characterization of complete intersection local rings through the homological properties of the conormal module, in this paper, we define the characteristic module $\T_M$ and the cocharacteristic module $\E_M$ of $M$, and investigate their properties. Our main results include characterizations of Cohen--Macaulay and Gorenstein local rings. Also, we show that if the injective dimension of the conormal module over an almost complete intersection ring is finite, then $R$ is a complete intersection.
\end{abstract}
\maketitle
%%%%%%%%%%%%%%%%%%%%%%%%%%%%%%%%%%%%%%%%%%%%%%%%%%%%%%%%%
\section{Introduction}

Let $(R,\m)$ be a commutative noetherian local ring, and
let $\pi:Q\twoheadrightarrow\widehat R$ be a {\em Cohen presentation} of $R$; that is, a surjective homomorphism from a regular local ring $Q$ to the $\m$-adic completion $\widehat R$ of $R$. Let $I$ be the kernel of $\pi$.
Then the {\em conormal module} of $R$ is defined as the $R$-module $\C_R$ such that $\widehat{\C_R}\cong I/I^2$. Equivalently, one may think of the conormal module as the first Tor module of $\widehat R$ over $Q$,
$$
\widehat{\C_R}\cong\Tor_1^Q(\widehat R,\widehat R).
$$
The notion of conormal modules has been studied actively for more than half a century in its algebraic and geometric aspects. For example, see \cite{AH,B,F,V, V2} and the references therein. The most celebrated result at the subject is the Vasconcelos' Conjecture that states that if $\pd_R \C_R<\infty$, then the ring $R$ is a complete intersection. The conjecture recently was resolved by Briggs \cite{B}.
This result inspired us to investigate whether the ring $R$ can be characterized regarding certain (homological) properties of higher Tor modules of $\widehat{R}$ over $Q$.  
In this paper, we focus on the last (nonzero) Tor module of $\widehat{R}$ over $Q$ and define the {\em characteristic module} of $R$ to be such an $R$-module $\T_R$ that satisfies
$$
\widehat{\T_R}\cong\Tor_s^Q(\widehat R,\widehat R),
$$
where $s=\pd_Q\widehat R$.
More generally, for each finitely generated $R$-module $M$ we shall define the {\em characteristic module} $\T_M$ of $M$ by the existence of an isomorphism $\widehat{\T_M}\cong\Tor_s^Q(\widehat R,\widehat M)$. We also consider the dual notion and define the {\em cocharacteristic module} of $M$ as an $R$-module $\E_M$ such that $\widehat{\E_M}\cong\Ext_Q^s(\widehat R,\widehat M)$.

First we discuss the existence and uniqueness of (co)characteristic modules and show that if they exist, they do not depend on the choice of a Cohen presentation. More precisely, we prove the following theorem.

\begin{thm}\label{14}
Let $R$ be a local ring, and $M$ be an $R$-module.
\begin{enumerate}[\rm(1)]
\item
If $R$ is a homomorphic image of a Gorenstein local ring, then both $\T_M$ and $\E_M$ exist.
\item 
If $\T_M$ or $\E_M$ exists, then it is uniquely determined from $M$ up to isomorphism.
\end{enumerate}
\end{thm}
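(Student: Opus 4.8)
The plan is to prove (1) by exhibiting $\T_M$ and $\E_M$ explicitly from a generalized canonical module of $R$, and to prove (2) in two steps: uniqueness of the $R$-module underlying a prescribed completion, and independence of the chosen Cohen presentation. \emph{A general formula.} Fix a Cohen presentation $\pi\colon Q\twoheadrightarrow\widehat R$ with kernel $I$ and put $s=\pd_Q\widehat R$. Let $F_\bullet$ be the minimal $Q$-free resolution of $\widehat R$; it has length $s$, so $\omega_{\widehat R}:=\Ext^s_Q(\widehat R,Q)=\operatorname{coker}(\partial_s^{*}\colon F_{s-1}^{*}\to F_s^{*})$ is a finitely generated $\widehat R$-module, where $(-)^{*}=\Hom_Q(-,Q)$. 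Using the natural isomorphisms $\Hom_Q(F_i,N)\cong F_i^{*}\otimes_Q N$ and $\Hom_Q(F_i^{*},N)\cong F_i\otimes_Q N$ for finite free $F_i$, together with right exactness of $\otimes$ and left exactness of $\Hom$, one obtains for every finitely generated $\widehat R$-module $N$ natural isomorphisms
\[
\Ext^s_Q(\widehat R,N)\cong\omega_{\widehat R}\otimes_{\widehat R}N,\qquad
\Tor^Q_s(\widehat R,N)\cong\Hom_{\widehat R}(\omega_{\widehat R},N)
\]
(where $\otimes_Q,\otimes_{\widehat R}$ and $\Hom_Q,\Hom_{\widehat R}$ agree on $\widehat R$-modules). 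Thus the two top (co)homology modules of $\widehat R$ over $Q$ are governed by the single $\widehat R$-module $\omega_{\widehat R}$.

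\emph{Proof of (1).} If $R$ is a homomorphic image of a Gorenstein local ring, then by Kawasaki's theorem $R$ has a normalized dualizing complex $D$; set $\omega_R:=\H^{-\depth R}(D)$, a finitely generated $R$-module (the canonical module when $R$ is Cohen--Macaulay, and $\H^{\depth R}_\m(R)^{\vee}$ in general). I claim $\widehat{\omega_R}\cong\omega_{\widehat R}$: both $D\otimes^{\mathbf{L}}_R\widehat R$ and $\rhom_Q(\widehat R,Q)[\dim Q]$ are normalized dualizing complexes for $\widehat R$, hence isomorphic, and applying $\H^{-\depth R}(-)$ — using flatness of completion, $\depth R=\depth\widehat R$, and $s=\dim Q-\depth\widehat R$ (Auslander--Buchsbaum) — gives $\widehat{\omega_R}\cong\Ext^s_Q(\widehat R,Q)=\omega_{\widehat R}$. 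Setting $N=\widehat M$ in the two displayed isomorphisms and using that $\Hom_R$ and $\otimes_R$ commute with the flat base change $R\to\widehat R$ for the finitely presented module $\omega_R$ yields
\[
\Ext^s_Q(\widehat R,\widehat M)\cong\widehat{\omega_R\otimes_R M},\qquad
\Tor^Q_s(\widehat R,\widehat M)\cong\widehat{\Hom_R(\omega_R,M)}.
\]
Hence $\E_M:=\omega_R\otimes_R M$ and $\T_M:=\Hom_R(\omega_R,M)$ exist; in particular $\E_R\cong\omega_R$ and $\T_R\cong\Hom_R(\omega_R,R)$.

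\emph{Proof of (2).} First fix the Cohen presentation. If $N,N'$ are finitely generated $R$-modules with $\widehat N\cong\widehat{N'}$, then $\Hom_{\widehat R}(\widehat N,\widehat{N'})\cong\Hom_R(N,N')\otimes_R\widehat R$ is the completion of a finitely generated $R$-module, and the set of isomorphisms inside it is nonempty and open in the $\m$-adic topology (if $f$ is an isomorphism and $g-f\in\m\Hom_{\widehat R}(\widehat N,\widehat{N'})$ then $g=f\circ(1+f^{-1}(g-f))$ is invertible, since $\m\operatorname{End}_{\widehat R}(\widehat N)$ lies in the Jacobson radical). Hence some $\varphi\in\Hom_R(N,N')$ maps to an isomorphism, so $\varphi$ is an isomorphism by faithful flatness, and $N\cong N'$. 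It remains to see that $\Tor^Q_s(\widehat R,\widehat M)$ and $\Ext^s_Q(\widehat R,\widehat M)$ do not depend, up to isomorphism, on the presentation. If $x\in I\smallsetminus\m_Q^2$ then $\bar Q:=Q/(x)$ is regular local with $\pd_{\bar Q}\widehat R=s-1$, and since $x$ annihilates $\widehat M$ the object $\bar Q\otimes^{\mathbf{L}}_Q\widehat M$ is isomorphic to $\widehat M\oplus\widehat M[1]$ in the derived category of $\bar Q$; hence
\[
\Tor^Q_n(\widehat R,\widehat M)\cong\Tor^{\bar Q}_n(\widehat R,\widehat M)\oplus\Tor^{\bar Q}_{n-1}(\widehat R,\widehat M),\qquad
\Ext^n_Q(\widehat R,\widehat M)\cong\Ext^n_{\bar Q}(\widehat R,\widehat M)\oplus\Ext^{n-1}_{\bar Q}(\widehat R,\widehat M),
\]
and for $n=s$ the first summand on each side vanishes, so passing from $Q$ to $\bar Q$ preserves the top Tor and top Ext. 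Since any two Cohen presentations of $R$ are obtained from a common one (e.g.\ a suitable completed tensor product over a coefficient ring) by successively killing regular elements lying in the defining ideal but outside the square of the maximal ideal, iterating this reduction gives the asserted independence; together with the first step, this shows the (co)characteristic module, when it exists, is determined up to isomorphism by $M$.

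\emph{Main obstacle.} The technical heart is the presentation-independence in (2). The one-variable splitting is routine once one notes that the relevant element of $I$ may be taken outside $\m_Q^2$ (so that $\bar Q$ stays regular and $\pd_{\bar Q}\widehat R$ drops by exactly one); the point requiring care is the reduction that any two Cohen presentations are linked through a common refinement by such regular sequences, where Cohen's structure theorem enters — especially in mixed characteristic, and in checking at each stage that the element being killed avoids the square of the maximal ideal. A minor subtlety in (1) is fixing the normalization of $D$ so that its top nonvanishing cohomology is exactly $\Ext^s_Q(\widehat R,Q)$, not a shift of it.
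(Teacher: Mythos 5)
Your part (1) is essentially the paper's own construction: you build a ``generalized canonical module'' $\omega_R$ as the top nonvanishing cohomology of the normalized dualizing complex (the paper's quasi-canonical module $\c_R=\H^{\cmd R}(\D_R)$), check $\widehat{\omega_R}\cong\Ext_Q^s(\widehat R,Q)$, and use the last differential of the minimal $Q$-free resolution of $\widehat R$ to get $\Tor_s^Q(\widehat R,N)\cong\Hom_{\widehat R}(\omega_{\widehat R},N)$ and $\Ext_Q^s(\widehat R,N)\cong\omega_{\widehat R}\otimes_{\widehat R}N$; this is exactly Propositions 3.2, 3.3 and 4.2 of the paper, and setting $\T_M=\Hom_R(\omega_R,M)$, $\E_M=\omega_R\otimes_RM$ is the paper's proof of (1). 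Your descent step in (2) (if $\widehat N\cong\widehat{N'}$ then $N\cong N'$, via density of $\Hom_R(N,N')$ in its completion and openness of the set of isomorphisms) is correct; the paper uses this fact implicitly.

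The genuine gap is in the presentation-independence half of (2). Your argument rests on the assertion that any two Cohen presentations are linked through a common one ``by successively killing regular elements lying in the defining ideal but outside the square of the maximal ideal,'' and you never prove this; you yourself flag it as the technical heart. Making it precise is real work: one must first reduce to complete $Q$ (the paper's definition of a Cohen presentation does not require $Q$ complete), then build a dominating presentation $P\twoheadrightarrow\widehat R$ with surjections $P\to Q$ and $P\to Q'$ compatible with the maps to $\widehat R$, which requires lifting coefficient fields/Cohen rings along surjections of complete local rings (formal smoothness, Cohen's structure theorem), handling the ramified mixed-characteristic case $p\in\m_Q^2$ (where $Q$ is not a power series ring over a Cohen ring), checking surjectivity of the comparison maps, and invoking the fact that the kernel of a surjection of regular local rings is generated by part of a regular system of parameters so that your one-variable splitting can be iterated. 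None of this appears in the write-up, so as it stands the step is an unproved claim rather than a proof. Moreover, your own toolkit already closes the gap in one line, and this is the route the paper takes: by your ``general formula,'' $\Tor_s^Q(\widehat R,\widehat M)\cong\Hom_{\widehat R}(\omega_{\widehat R},\widehat M)$ and $\Ext_Q^s(\widehat R,\widehat M)\cong\omega_{\widehat R}\otimes_{\widehat R}\widehat M$ with $\omega_{\widehat R}=\Ext_Q^s(\widehat R,Q)$, and $\omega_{\widehat R}$ is independent of the presentation because $\rhom_Q(\widehat R,Q)[\dim Q-\dim\widehat R]$ is a normalized dualizing complex of $\widehat R$, unique up to isomorphism --- no hypothesis on $R$ is needed here since $\widehat R$ always admits one. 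Replacing your Cohen-presentation comparison by this observation (Proposition 3.2(2) in the paper) turns your sketch into a complete proof.
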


As to the structure of (co)characteristic modules and their completions, we present the theorem below.

\begin{thm}\label{17}
Let $R$ be a local ring.
Then the following statements hold true.
\begin{enumerate}[\rm(1)]
\item
Suppose that $\E_R$ exists.
If $R$ is a Cohen--Macaulay ring, then $\E_R$ is a canonical module of $R$. 
\item
Suppose that $\E_R$ exists.
For every Cohen presentation $Q\twoheadrightarrow\widehat R$, there are isomorphisms 
$$
\E_{\widehat R}\cong\widehat{\E_R}\cong\Ext_Q^s(\widehat R,\widehat R)\cong\Ext_Q^s(\widehat R,Q),\qquad s:=\pd_Q\widehat R=\dim Q-\depth R.
$$ 
\item
If $\E_M$ exists, then $\E_{\widehat M}\cong\widehat{\E_M}\cong\E_{\widehat R}\otimes_{\widehat R}\widehat M$.
If $\T_M$ exists, then $
\T_{\widehat M}\cong\widehat{\T_M}\cong\Hom_{\widehat R}(\E_{\widehat R},\widehat M)$.
\end{enumerate}
\end{thm}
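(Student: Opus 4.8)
The plan is to fix a Cohen presentation $\pi\colon Q\twoheadrightarrow\widehat R$, write $I=\ker\pi$ and $d=\dim Q$, and recall that $s=\pd_Q\widehat R=d-\depth R$ by Auslander--Buchsbaum (as $Q$ is regular, so $\depth Q=d$, and $\depth\widehat R=\depth R$). Fix a finite free resolution $F_\bullet\colon 0\to F_s\xrightarrow{\,d_s\,}F_{s-1}\to\cdots\to F_0\to\widehat R\to0$ over $Q$; all we use is that it has length $s$. The computational engine is that the \emph{last} Ext and Tor modules are read off from the end of the (dualized) complex: since $F_{s+1}=0$, for an arbitrary $Q$-module $N$ one has, using $\Hom_Q(F_i,N)\cong F_i^{*}\otimes_QN$ and right-exactness of $-\otimes_QN$,
\[
\Ext_Q^s(\widehat R,N)\cong\operatorname{coker}\bigl(F_{s-1}^{*}\otimes_QN\to F_s^{*}\otimes_QN\bigr)\cong\Ext_Q^s(\widehat R,Q)\otimes_QN,
\]
and dually, using $F_i^{**}\cong F_i$ and left-exactness of $\Hom_Q(-,N)$,
\[
\Tor_s^Q(\widehat R,N)\cong\ker\bigl(F_s\otimes_QN\to F_{s-1}\otimes_QN\bigr)\cong\Hom_Q\bigl(\Ext_Q^s(\widehat R,Q),N\bigr).
\]
Moreover $\Ext_Q^s(\widehat R,Q)=\operatorname{coker}(d_s^{*})$ is annihilated by $I=\ann_Q\widehat R$, so it is a finitely generated $\widehat R$-module and $\Ext_Q^s(\widehat R,Q)\otimes_Q\widehat R\cong\Ext_Q^s(\widehat R,Q)$.

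Next I record the soft fact that over the complete ring $\widehat R$ nothing changes under completion: $Q\twoheadrightarrow\widehat R$ is also a Cohen presentation of $\widehat R$ with the same $s$, and every finitely generated $\widehat R$-module is $\m$-adically complete. Hence $\E_{\widehat R},\T_{\widehat R},\E_{\widehat M},\T_{\widehat M}$ all exist and equal the obvious $\widehat R$-modules $\Ext_Q^s(\widehat R,\widehat R)$, $\Tor_s^Q(\widehat R,\widehat R)$, $\Ext_Q^s(\widehat R,\widehat M)$, $\Tor_s^Q(\widehat R,\widehat M)$; and when $\E_M$ (resp.\ $\T_M$) exists, its completion is a finitely generated $\widehat R$-module with the same completion as $\E_{\widehat M}$ (resp.\ $\T_{\widehat M}$), so $\E_{\widehat M}\cong\widehat{\E_M}$ (resp.\ $\T_{\widehat M}\cong\widehat{\T_M}$) by the uniqueness in Theorem~\ref{14}(2).

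Now (2) follows by taking $N=\widehat R$ in the first display together with $\Ext_Q^s(\widehat R,Q)\otimes_Q\widehat R\cong\Ext_Q^s(\widehat R,Q)$, which gives $\widehat{\E_R}\cong\Ext_Q^s(\widehat R,\widehat R)\cong\Ext_Q^s(\widehat R,Q)$; combined with the previous paragraph this is the asserted chain. For (3), put $N=\widehat M$: since $\widehat M$ is a $\widehat R$-module, the first display and (2) give
\[
\widehat{\E_M}\cong\Ext_Q^s(\widehat R,\widehat M)\cong\Ext_Q^s(\widehat R,Q)\otimes_Q\widehat M\cong\Ext_Q^s(\widehat R,Q)\otimes_{\widehat R}\widehat M\cong\E_{\widehat R}\otimes_{\widehat R}\widehat M,
\]
while the second display gives $\widehat{\T_M}\cong\Tor_s^Q(\widehat R,\widehat M)\cong\Hom_Q(\Ext_Q^s(\widehat R,Q),\widehat M)\cong\Hom_{\widehat R}(\E_{\widehat R},\widehat M)$, the last step because a $Q$-linear map between $\widehat R$-modules is $\widehat R$-linear; together with $\E_{\widehat M}\cong\widehat{\E_M}$ and $\T_{\widehat M}\cong\widehat{\T_M}$ this is (3). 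Finally (1) is a specialization: if $R$ is Cohen--Macaulay then so is $\widehat R$, so $s=d-\depth\widehat R=\dim Q-\dim\widehat R$, and the standard fact that $\Ext_Q^s(\widehat R,Q)$ is then the canonical module $\omega_{\widehat R}$ of the Cohen--Macaulay quotient $\widehat R$ of the Gorenstein ring $Q$ (e.g.\ Bruns--Herzog, \emph{Cohen--Macaulay Rings}, 3.3.7) combines with (2) to give $\widehat{\E_R}\cong\omega_{\widehat R}$, i.e.\ $\E_R$ is a canonical module of $R$.

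There is no deep obstacle here: the crux is really the two displayed isomorphisms --- reading the last (co)homology off the end of a length-$s$ complex and commuting it past a right- or left-exact functor --- together with the observation that $\ann_Q\widehat R$ kills $\Ext_Q^s(\widehat R,Q)$, which makes that module live over $\widehat R$. The points that need a little care are: checking that these isomorphisms are natural enough to be independent of the chosen resolution and Cohen presentation (which ultimately reduces to Theorem~\ref{14}(2)), and being precise about what ``$\E_R$ is a canonical module of $R$'' means when $R$ itself need not be a homomorphic image of a Gorenstein ring, namely that $\widehat{\E_R}\cong\omega_{\widehat R}$.
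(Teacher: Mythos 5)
Your proof is correct and takes essentially the same route as the paper: your two displayed isomorphisms (last Ext as $\Ext_Q^s(\widehat R,Q)\otimes_Q N$ and last Tor as $\Hom_Q(\Ext_Q^s(\widehat R,Q),N)$) are precisely the content of the paper's Proposition \ref{2}, which the paper cites rather than re-derives. The remaining ingredients — passing to $\widehat R$, using the uniqueness in Theorem \ref{14}(2) for independence of the Cohen presentation, and invoking \cite[Theorem 3.3.7(b)]{BH} (i.e.\ Proposition \ref{15}(3)) for the Cohen--Macaulay case — match the paper's argument.
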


Cohen--Macaulay local rings are characterized in terms of the dimensions of (co)characteristic modules.

\begin{thm}\label{9}
Suppose that $R$ is a homomorphic image of a Gorenstein local ring.
The following conditions are equivalent.
\begin{enumerate}[\rm(1)]
\item 
The ring $R$ is Cohen--Macaulay.
\item 
One has $\dim\E_R=\dim R$.
\item 
One has $\dim\T_R=\dim R$.
\item
There exists an $R$-module $M$ such that $\dim\E_M=\dim R$.
\item
There exists an $R$-module $M$ such that $\dim\T_M=\dim R$.
\end{enumerate}
\end{thm}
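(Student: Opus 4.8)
The plan is to combine an elementary upper bound, valid for every finitely generated module, with the structural results of Theorem~\ref{17}. Fix a Cohen presentation $Q\twoheadrightarrow\widehat R$ with kernel $I$, and set $s=\pd_Q\widehat R=\dim Q-\depth R$, so that $\widehat{\T_M}=\Tor^Q_s(\widehat R,\widehat M)$ and $\widehat{\E_M}=\Ext^s_Q(\widehat R,\widehat M)$ for every finitely generated $R$-module $M$ (these modules exist by Theorem~\ref{14}(1)). Since dimension and depth are unchanged by $\m$-adic completion, I will freely pass to these $Q$-modules.

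\emph{Upper bound.} I first show $\dim\T_M\le\depth R$ and $\dim\E_M\le\depth R$ for every finitely generated $M$. Let $\p$ lie in the support of $\Tor^Q_s(\widehat R,\widehat M)$ and let $\q\subseteq Q$ be its preimage, so $\widehat R_\p=Q_\q/I_\q$. Localizing, $\Tor^{Q_\q}_s(\widehat R_\p,\widehat M_\p)\ne0$, whence $\pd_{Q_\q}\widehat R_\p\ge s$; on the other hand $\pd_{Q_\q}\widehat R_\p\le\pd_Q\widehat R=s$ by localizing a length-$s$ free resolution, so $\pd_{Q_\q}\widehat R_\p=s$. The Auslander--Buchsbaum formula over the regular ring $Q_\q$ gives $\depth\widehat R_\p=\dim Q_\q-s$, and since $Q$ is regular and $Q/\q\cong\widehat R/\p$ we have $\dim Q=\height\q+\dim(Q/\q)=\dim Q_\q+\dim(\widehat R/\p)$. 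As $\dim Q=\depth R+s$, these combine to $\dim(\widehat R/\p)=\depth R-\depth\widehat R_\p\le\depth R$; taking the supremum over $\p$ gives the bound for $\T_M$, and the identical argument with $\Ext_Q^i(\widehat R,-)=0$ for $i>s$ gives it for $\E_M$. Consequently each of (2)--(5) exhibits a module of dimension $\dim R$, which with the bound forces $\depth R=\dim R$, i.e.\ (1); and $(2)\Rightarrow(4)$, $(3)\Rightarrow(5)$ hold trivially with $M=R$.

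\emph{The Cohen--Macaulay case.} Suppose $R$ is Cohen--Macaulay; I must prove (2) and (3). By Theorem~\ref{17}(1), $\E_R$ is a canonical module $\omega_R$, hence a maximal Cohen--Macaulay module, so $\dim\E_R\ge\depth\omega_R=\depth R=\dim R$, which with the bound gives (2). For (3), Theorem~\ref{17}(3) together with Theorem~\ref{17}(1) applied to the Cohen--Macaulay ring $\widehat R$ gives $\widehat{\T_R}\cong\Hom_{\widehat R}(\E_{\widehat R},\widehat R)\cong\Hom_{\widehat R}(\omega_{\widehat R},\widehat R)$. Choose a minimal prime $\p$ of $\widehat R$ with $\dim(\widehat R/\p)=\dim\widehat R$; then $A:=\widehat R_\p$ is Artinian, $(\omega_{\widehat R})_\p$ is a canonical module of $A$, namely the injective hull $\E_A(k)$ of its residue field $k$, and localizing gives $(\widehat{\T_R})_\p\cong\Hom_A(\E_A(k),A)$. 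Over the complete local ring $A$, Matlis duality identifies $\Hom_A(\E_A(k),A)$ with the Matlis dual of $\E_A(k)\otimes_A\E_A(k)$, which is nonzero by Nakayama; hence $(\widehat{\T_R})_\p\ne0$. Therefore $\p\in\Supp\widehat{\T_R}$ and $\dim\T_R\ge\dim(\widehat R/\p)=\dim R$, which with the bound gives (3).

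The crux is $(1)\Rightarrow(3)$: the upper bound and the four implications toward (1) are routine bookkeeping with localization and Auslander--Buchsbaum, but $(1)\Rightarrow(3)$ genuinely uses the structural identification $\T_R\cong\Hom_R(\E_R,R)$ from Theorem~\ref{17} and the (elementary but not transparent) fact that $\Hom_A(\E_A(k),A)\ne0$ for an Artinian local ring $A$ --- which is where completeness, and hence passage through $\widehat R$, is indispensable. One must also check that the localizations and completions invoked preserve supports and dimensions.
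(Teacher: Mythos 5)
Your proposal is correct, but it is organized quite differently from the paper's argument. The paper disposes of this theorem in two lines: it passes to the completion and invokes Theorem \ref{8}, whose proof funnels all conditions through ``$\dim\e(R)=\dim R$'' using the identifications $\t(M)\cong\Hom_R(E,M)$ and $\e(M)\cong E\otimes_RM$ of Proposition \ref{2} (so that all supports lie in $\Supp E$), deduces condition (3) from $\Ass\Hom_R(E,R)=\Supp E\cap\Ass R$, and obtains Cohen--Macaulayness from the localization formula $\height\q-\depth R_\p=n-t$ of Proposition \ref{3}. You instead prove directly the uniform bound $\dim\T_M\le\depth R$ and $\dim\E_M\le\depth R$ for every $M$, by localizing $\Tor^Q_s(\widehat R,\widehat M)$ (resp.\ $\Ext^s_Q(\widehat R,\widehat M)$) at a support prime and applying Auslander--Buchsbaum over $Q_\q$ --- this is the same computational core as Proposition \ref{3} and the remark following Theorem \ref{8} (which bounds $\dim E$ by $\depth R$ via \cite[Theorem 8.1.1(b)]{BH}), but stated for all modules at once, which makes every implication toward (1) immediate without the functorial machinery. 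For the converse you handle $(1)\Rightarrow(2)$ by maximal Cohen--Macaulayness of the canonical module, and $(1)\Rightarrow(3)$ via Theorem \ref{17}(3) together with the nonvanishing of $\Hom_A(\E_A(k),A)$ over the Artinian localization $A=\widehat R_\p$ at a minimal prime of maximal dimension, using Matlis duality and Nakayama; the paper's counterpart of this step is the cleaner formula $\Ass\Hom_R(E,R)=\Supp E\cap\Ass R$, which yields the same nonvanishing without passing through injective hulls (and you could have shortened your argument by using it). What each approach buys: yours is self-contained modulo Theorems \ref{14} and \ref{17} and gives the sharper statement that every (co)characteristic module has dimension at most $\depth R$; the paper's detour through Theorem \ref{8} reuses the adjunction $(\e,\t)$ already set up and yields the extra equivalent conditions on $\alpha_R$ and $\beta_E$. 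All the individual steps you use (localization of Tor/Ext, $\pd_{Q_\q}\widehat R_\p=s$, localization of canonical modules, Matlis duality over the Artinian complete ring $A$, preservation of dimension and depth under completion) check out, so there is no gap.
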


As an application of Theorem \ref{9}, we can also relate a (co)characteristic module to the generic Gorensteinness, and obtain a characterization of the Gorenstein local ring.

\begin{thm}\label{10}
Let $R$ be a local ring. The following conditions are equivalent.
\begin{enumerate}[\rm(1)]
\item
The ring $R$ is Gorenstein.
\item One has that $\E_R$ exists and is nonzero and free.
\item One has that $\T_R$ exists and is nonzero and free.
\item There is  an $R$-module $M$ with $\dim M=\dim R$ such that $\E_M$ exists and $\E_M\cong M$.
\item There is an $R$-module $M$ with $\dim M=\dim R$ and $\Gdim M<\infty$ such that $\T_M$ exists and $\T_M\cong M$.
\item There is an $R$-module $M$ with $\Gdim_R M<\infty$ such that $\E_M$ exists, $\pd_R\E_M<\infty$ and $\dim\E_M=\dim R$.
\item There is an $R$-module $M$ with $\pd_R M<\infty$ such that $\T_M$ exists, $\Gdim_R\T_M<\infty$ and $\dim\T_M=\dim R$.
\end{enumerate}
\end{thm}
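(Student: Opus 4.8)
The plan is to prove $(1)\Rightarrow(i)$ for every $i\in\{2,\dots,7\}$ by taking $M=R$, and then to prove each $(i)\Rightarrow(1)$. For the easy direction: if $R$ is Gorenstein it is a homomorphic image of a Gorenstein local ring, so $\E_R$ and $\T_R$ exist (Theorem~\ref{14}(1)); since $R$ is Cohen--Macaulay, $\E_R$ is a canonical module of $R$ (Theorem~\ref{17}(1)), hence $\E_R\cong R$; and Theorem~\ref{17}(3) gives $\widehat{\T_R}\cong\Hom_{\widehat R}(\E_{\widehat R},\widehat R)\cong\Hom_{\widehat R}(\widehat R,\widehat R)\cong\widehat R$ (using $\E_{\widehat R}\cong\widehat R$), so $\T_R\cong R$, because a finitely generated module with free completion is free. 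Thus $\E_R\cong R\cong\T_R$ is nonzero and free with vanishing projective and Gorenstein dimension and with dimension $\dim R$, so (2)--(7) all hold with $M=R$.

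\emph{Reduction to the complete Cohen--Macaulay case.} For the converses I would first replace $R$ by $\widehat R$: this preserves the Gorenstein property, the dimension of a module, $\pd$ and $\Gdim$, carries $\E_M,\T_M$ to $\E_{\widehat M},\T_{\widehat M}$ (Theorem~\ref{17}(3)), and makes $R$ a homomorphic image of a regular local ring. So assume $R=\widehat R$. In every one of (2)--(7) the dimension hypothesis together with Theorem~\ref{9} forces $R$ to be Cohen--Macaulay; writing $\omega$ for its canonical module, Theorem~\ref{17}(1),(3) gives $\E_R\cong\omega$, $\E_M\cong\omega\otimes_RM$ and $\T_M\cong\Hom_R(\omega,M)$ for all $M$, and $\dim\E_M=\dim\T_M=\dim M$ because $\omega$ has full support.

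\emph{Three ring lemmas.} I would next record, for a Cohen--Macaulay local ring $S$ with canonical module $\omega_S$: (a) if $\omega_S$ is free, then $S$ is Gorenstein, since then $\operatorname{id}_SS=\operatorname{id}_S\omega_S<\infty$; (b) if $\Hom_S(\omega_S,S)$ is free and nonzero, then $S$ is Gorenstein; (c) if $\omega_S\otimes_SN\cong N$ for some nonzero finitely generated $N$, then $S$ is Gorenstein. For (c), cut by a maximal $S$-regular sequence $\xx$; as $\omega_S$ is maximal Cohen--Macaulay, $\xx$ stays $\omega_S$-regular and $\omega_S/\xx\omega_S$ is the canonical module (the injective hull of $k$) of the Artinian ring $\bar S=S/(\xx)$, and base change turns the isomorphism into $(\omega_S/\xx\omega_S)\otimes_{\bar S}\bar N\cong\bar N$ with $\bar N\neq0$; iterating to $n$th tensor powers and comparing minimal numbers of generators ($\type(\bar S)^n\mu(\bar N)\le\ell_{\bar S}(\bar N)$ for all $n$) forces $\type(S)=1$. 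Lemma (b) is an analogous Artinian reduction---the point being that an injective module which is free must be the ring itself---once the base-change map $\Hom_S(\omega_S,S)\otimes_S\bar S\to\Hom_{\bar S}(\omega_S/\xx\omega_S,\bar S)$ is identified as an isomorphism. Given (a)--(c): $(2)\Rightarrow(1)$ follows from (a) applied to $\E_R\cong\omega$; $(3)\Rightarrow(1)$ from (b) applied to $\T_R\cong\Hom_R(\omega,R)$; and $(4)\Rightarrow(1)$ from (c) applied with $N=M$, using $\E_M\cong\omega\otimes_RM\cong M$.

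\emph{The homological conditions, and the main obstacle.} For (5), (6), (7) I would use the Auslander/Bass-class dictionary for the dualizing module $\omega$: finiteness of $\Gdim_RM$ (in particular of $\pd_RM$) places $M$ in the Auslander class of $\omega$, so $\Tor^R_{>0}(\omega,M)=0$ and the natural map $M\to\Hom_R(\omega,\omega\otimes_RM)$ is an isomorphism, while $\omega\otimes_R-$ and $\Hom_R(\omega,-)$ are inverse equivalences between the Auslander and the Bass classes. For $(5)$, the relation $\T_M\cong M$ reads $\Hom_R(\omega,M)\cong M$; combined with $M$ lying in the Auslander class this should yield $\omega\otimes_RM\cong M$, reducing to $(4)$. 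For $(6)$, the isomorphism $M\cong\Hom_R(\omega,\omega\otimes_RM)=\T_{\E_M}$ shows that $M':=\E_M$ satisfies the hypotheses of $(7)$ (namely $\pd_RM'<\infty$, $\T_{M'}\cong M$ so $\Gdim_R\T_{M'}<\infty$, and $\dim\T_{M'}=\dim M=\dim R$), i.e.\ $(6)\Rightarrow(7)$. Finally $(7)\Rightarrow(1)$ is the technical heart: $N:=\T_M=\Hom_R(\omega,M)$ lies in the Auslander class and has $\dim N=\dim R$, and passing to a minimal prime $\p$ with $\dim R/\p=\dim R$ puts us over an Artinian ring where $M_\p$ is free and $N_\p$ is totally reflexive. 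The obstacle I foresee is promoting this local statement to $R$ itself: rather than localizing, I would cut $R$ by a maximal regular sequence and transport the isomorphisms $\T_M\cong\Hom_R(\omega,M)$ and $\E_M\cong\omega\otimes_RM$ through the reduction---the $\Tor$- and $\Ext$-vanishing furnished by membership in the Auslander and Bass classes being precisely what makes the relevant base-change maps isomorphisms---so as to land over $k$ and conclude $\type(R)=1$ as in (a)--(c).
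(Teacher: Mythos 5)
Your reduction to the complete Cohen--Macaulay case, the easy direction $(1)\Rightarrow(2)$--$(7)$, and lemmas (a) and (c) are fine, and up to that point you are close to the paper, which after completing works with $\t(M)\cong\Hom_R(\omega,M)$ and $\e(M)\cong\omega\otimes_RM$ (Proposition \ref{2}) and settles $(4)\Rightarrow(1)$ by the single count $\nu_R(\omega\otimes_RM)=\type(R)\,\nu_R(M)$ of Proposition \ref{type}(1) --- your Artinian reduction in (c) is a detour for the same count. The genuine gap is everything routed through your lemma (b) and the ``transport through the reduction'' for (5)--(7): these are exactly the implications the paper does \emph{not} do by elementary counting, but by invoking \cite[Theorem 2.8]{cdim} (if $\Hom_R(\omega,M)\in\A(R)$ then $M\in\B(R)$) together with \cite[(3.4.12)]{C}, and, for (5), a Bass--Poincar\'e series comparison $\I^M(t)=\P_E(t)\,\I^M(t)$ in Corollary \ref{19}. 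Your sketch does not replace these inputs.

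Concretely: (i) the base-change map $\Hom_R(\omega,N)\otimes_RR/(\xx)\to\Hom_{R/(\xx)}(\omega/\xx\omega,N/\xx N)$ is not an isomorphism in general; at each step of the sequence its cokernel is the $x$-torsion of $\Ext^1(\omega,N)$, and for (b), where $N=R$, the vanishing of $\Ext^{>0}_R(\omega,R)$ is not available --- it is a Tachikawa-type condition whose unconditional validity is itself of the same depth as the statement you want. Moreover in (5)--(7) the maximal $R$-regular sequence need not be regular on $M$ or on $\t(M)$ (finite $\pd$ or $\Gdim$ gives no lower bound on depth), so even the first base change is unjustified. (ii) Even granting the reduction, the Artinian endgame of (b) is not an argument: the free module in hand is $\Hom_{\bar S}(\omega_{\bar S},\bar S)$, which is not injective, so ``an injective module which is free must be the ring'' does not apply; after Matlis duality the claim becomes that $\omega_{\bar S}\otimes_{\bar S}\omega_{\bar S}\cong\omega_{\bar S}^{\oplus\type\bar S}$ forces $\type\bar S=1$, and no proof is offered (lengths and minimal numbers of generators are consistent with $\type>1$, so no naive count settles it). (iii) In (5), the step ``$\Hom_R(\omega,M)\cong M$ together with $M\in\A(R)$ should yield $\omega\otimes_RM\cong M$'' is precisely the content of \cite[Theorem 2.8(a)]{cdim} applied to $\Hom_R(\omega,M)\in\A(R)$; it is not a formal consequence of the Auslander-class axioms, which give $M\cong\Hom_R(\omega,\omega\otimes_RM)$ but not $M\in\B(R)$. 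As written, therefore, $(3)\Rightarrow(1)$, $(5)\Rightarrow(1)$ and $(6),(7)\Rightarrow(1)$ are unproved; you should either import the Foxby-class results the paper uses (which make these implications short) or supply a genuinely new proof of your lemma (b) and of the (5)$\Rightarrow$(4) step.
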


The organization of this paper is as follows. In Section 2, we collect definitions and properties of basic notions used in this paper. In Section 3, we state the precise definitions of a quasi-canonical module of $R$, a characteristic module and a cocharacteristic module of a given $R$-module. We investigate their basic properties in this section. In Section 4, we study characteristic and cocharacteristic modules over a quotient of a regular local ring from functorial approaches. In the final Section 5, we give the proofs of the theorems mentioned above and state some other related results.

%%%%%%%%%%%%%%%%%%%%%%%%%%%%%%%%%%%%%%%%%%%%%
\section{Preliminaries}

This section is devoted to preliminaries for the later section. We begin with stating our convention adopted throughout this paper.

\begin{conv}
Throughout this paper, we assume that all rings are commutative and noetherian. For a local ring $(R,\m)$ we denote by $\widehat{(-)}$ the $\m$-adic completion.
A {\em Cohen presentation} is by definition a surjective ring homomorphism $Q\twoheadrightarrow\widehat R$ where $Q$ is a regular local ring.
\end{conv}

We recall the definition of a dualizing complex of $R$ and some of its fundamental properties.

\begin{defn}
\begin{enumerate}[\rm(1)]
\item
Let $D$ be an $R$-complex with finitely generated homology (note then that $D$ is homologically bounded).
We say that $D$ is {\em dualizing} if it has finite injective dimension and the homothety morphism $R\to\rhom_R(D,D)$ is a quasi-isomorphism.
\item
We denote by $\D_R$ a {\em normalized dualizing complex} of $R$, that is, $\D_R$ is a dualizing $R$-complex with $\inf\{i\mid\H^i(\D_R)\ne0\}=0$. 
\end{enumerate}
\end{defn}

\begin{rem}
\begin{enumerate}[\rm(1)]
\item
A dualizing complex of $R$ is uniquely determined up to shifts in the derived category of $R$; see \cite[Chapter V, Theorem 3.1]{H} and \cite[(A.8.3)]{C}.
Hence a normalized dualizing complex $\D_R$ of $R$ is uniquely determined up to quasi-isomorphism.
\item 
A local ring $R$ admits a dualizing complex if and only if $R$ is a homomorphic image of a Gorenstein local ring; see \cite[Corollary 1.4]{K}.
\item 
If $R$ is a homomorphic image of a Gorenstein local ring $S$, then 
$$
\D_R\cong\rhom_S(R,S)[\dim S-\dim R]
$$
in the derived category of $R$.
This is a consequence of the derived Hom-$\otimes$ adjointness and the fact that $S$ is a dualizing complex of $S$.
\item
If $R$ is Cohen--Macaulay, then $\H^0(\D_R)$ is a canonical module of $R$ by (3) and \cite[Theorem 3.3.7(b)]{BH}.
\end{enumerate}
\end{rem}

Next we recall some basic numerical invariants for local rings and modules over them.

\begin{defn}
Let $(R,\m,k)$ be a local ring.
\begin{enumerate}[(1)]
\item
We denote by $\cmd R$ the {\em Cohen--Macaulay defect} of $R$, that is, $\cmd(R)=\dim R-\depth R$.
\item 
We denote by $\type_R(M)$ the {\em type} of an $R$-module $M$, that is, $\type_R(M)=\dim_k\Ext_R^{\depth_RM}(k,M)$.
\item
For a finitely generated $R$-module $M$ we denote by $\nu_R(M)$ the minimal number of generators of $M$, that is, $\nu_R(M)=\dim_k(M\otimes_Rk)$.
\end{enumerate}
\end{defn}

The Auslander class and the Bass class of a Cohen--Macaulay local ring with a canonical module, and the Poincar\'e and the Bass series of a complex play an important role in Section 4.

\begin{defn}\label{AB}
Let $R$ be a Cohen--Macaulay local ring with a canonical module $\omega$.
The {\em Auslander class} $\A(R)$ is defined as the class of $R$-modules $M$ such that the natural map $M\to\Hom_R(\omega,\omega\otimes_RM)$ is an isomorphism, $\Tor_{>0}^R(\omega,M)=0$ and $\Ext_R^{>0}(\omega,\omega\otimes_RM)=0$.
The {\em Bass class} $\B(R)$ is defined as the class of $R$-modules $M$ such that the natural map $\omega\otimes_R\Hom_R(\omega,M)\to M$ is an isomorphism, $\Ext_R^{>0}(\omega,M)=0$ and $\Tor^R_{>0}(\omega,\Hom_R(\omega,M))=0$.
\end{defn}

\begin{defn}
Let $X$ be an $R$-complex with finitely generated homology. 
Then the formal Laurent series 
$$
\P\!_X(t)=\sum_{n\in\Z}\beta_n^R(X)\,t^n,\qquad
\I^X(t)=\sum_{n\in\Z}\mu^n_R(X)\,t^n
$$
are respectively called the {\em Poincar\'e series} and the {\em Bass series} of $X$, where $\beta_n^R(X)=\dim_k\Tor_n^R(k,X)$ and $\mu^n_R(X)=\dim_k\Ext^n_R(k,X)$ are the $n$th Betti and Bass numbers of $X$, respectively.
\end{defn}

%%%%%%%%%%%%%%%%%%%%%%%%%%%%%%%%%%%%%%%%
\section{Basic properties of characteristic modules}

In this section, we state basic properties of characteristic modules and cocharacteristic modules. First of all, we define a quasi-canonical module of a local ring.

\begin{defn}
We define a {\em quasi-canonical module} of $R$ to be an $R$-module $\c_R$ such that
$$
\widehat{\c_R}\cong\Ext_Q^{\dim Q-\depth R}(\widehat R,Q),
$$
where $Q\twoheadrightarrow\widehat R$ is a Cohen presentation.
\end{defn}

The following proposition guarantees the existence and uniqueness of a quasi-canonical module of a local ring, and explains that it is a generalization of a canonical module of a Cohen--Macaulay local ring.

\begin{prop}\label{15}
\begin{enumerate}[\rm(1)]
\item
Suppose that $R$ is a homomorphic image of a Gorenstein local ring.
Then a quasi-canonical module of $R$ exists.
Indeed, $\H^{\cmd R}(\D_R)$ is a quasi-canonical module of $R$.
\item
If a quasi-canonical module of $R$ exists, then it is uniquely determined up to isomorphism.
\item 
If $R$ is Cohen--Macaulay, then a quasi-canonical module of $R$ is a canonical module of $R$.
\end{enumerate}
\end{prop}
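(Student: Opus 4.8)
The plan is to route everything through the normalized dualizing complex of $\widehat R$ and to compute it via a Cohen presentation. Fix a Cohen presentation $Q\twoheadrightarrow\widehat R$ with kernel $I$, and set $d:=\dim Q-\dim R$ and $s:=\dim Q-\depth R$, so that $s-d=\cmd R$ and $\pd_Q\widehat R=s$ (Auslander--Buchsbaum, using that $Q$ is regular and $\depth_Q\widehat R=\depth R$). Since $\widehat R=Q/I$ is a homomorphic image of the regular, hence Gorenstein, local ring $Q$, the formula for a dualizing complex recorded in Section~2 gives, in the derived category of $\widehat R$,
\[
\D_{\widehat R}\cong\rhom_Q(\widehat R,Q)[\dim Q-\dim\widehat R]=\rhom_Q(\widehat R,Q)[d];
\]
hence $\H^i(\D_{\widehat R})\cong\Ext_Q^{i+d}(\widehat R,Q)$ for all $i$, this cohomology is concentrated in degrees $0\le i\le\cmd R$, and its top value is $\H^{\cmd R}(\D_{\widehat R})\cong\Ext_Q^{s}(\widehat R,Q)$.

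For (1), I would first note that under the hypothesis $\D_R$ exists, and that $\widehat{\D_R}$ is again a normalized dualizing complex: completion is faithfully flat and commutes with $\rhom$ of complexes with finitely generated homology, which yields the homothety quasi-isomorphism over $\widehat R$ and finiteness of injective dimension, while faithful flatness preserves the normalization $\inf\{i\mid\H^i\ne0\}=0$; by uniqueness of normalized dualizing complexes, $\widehat{\D_R}\cong\D_{\widehat R}$. Since $R\to\widehat R$ is flat, $\H^i(\widehat{\D_R})\cong\widehat{\H^i(\D_R)}$, so taking $\H^{\cmd R}$ and using $\cmd\widehat R=\cmd R$ together with the previous paragraph gives $\widehat{\H^{\cmd R}(\D_R)}\cong\Ext_Q^{s}(\widehat R,Q)$, which is exactly the defining property. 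Thus $\H^{\cmd R}(\D_R)$ is a quasi-canonical module, so one exists. (Carrying this out for an arbitrary Cohen presentation also shows that $\Ext_Q^{s}(\widehat R,Q)$ is, up to isomorphism, independent of the presentation, so the notion is well posed.)

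For (2), if $X$ and $Y$ are two quasi-canonical modules of $R$, then $\widehat X\cong\Ext_Q^{s}(\widehat R,Q)\cong\widehat Y$ over $\widehat R$, and I would invoke the standard fact that finitely generated modules over a noetherian local ring with isomorphic completions are isomorphic (e.g., $\widehat{\Hom_R(X,Y)}\cong\Hom_{\widehat R}(\widehat X,\widehat Y)$, the surjections form an open subset meeting the dense submodule $\Hom_R(X,Y)$, and a surjection of finitely generated modules with isomorphic completions is an isomorphism), to conclude $X\cong Y$. For (3), assume $R$ is Cohen--Macaulay and $\c_R$ exists; then $\depth R=\dim R$, so $d=s$ and $\pd_Q\widehat R=d=\height I$ ($Q$ being Cohen--Macaulay). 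Hence $\Ext_Q^{j}(\widehat R,Q)=0$ unless $j=d$, so $\rhom_Q(\widehat R,Q)$ is concentrated in degree $d$ and $\D_{\widehat R}\cong\rhom_Q(\widehat R,Q)[d]$ is $\Ext_Q^{d}(\widehat R,Q)$ in cohomological degree $0$. Therefore $\widehat{\c_R}\cong\Ext_Q^{s}(\widehat R,Q)=\Ext_Q^{d}(\widehat R,Q)\cong\H^0(\D_{\widehat R})$, which is a canonical module of $\widehat R$ by the remark in Section~2; since an $R$-module is a canonical module of $R$ precisely when its completion is a canonical module of $\widehat R$, we get that $\c_R$ is a canonical module of $R$.

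The part I expect to need the most care is the degree bookkeeping in (1): one must check that the normalization of $\D_R$, the shift $[\dim Q-\dim\widehat R]$ in the Cohen-presentation formula for $\D_{\widehat R}$, and the identity $\cmd R+(\dim Q-\dim R)=\dim Q-\depth R$ line up, so that the top cohomology $\H^{\cmd R}(\D_R)$ corresponds to the top Ext module $\Ext_Q^{s}(\widehat R,Q)$ and not to some intermediate one. Everything else is formal once the compatibility of dualizing complexes with completion and the faithfully flat descent of module isomorphisms are in hand.
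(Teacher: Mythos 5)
Your proposal is correct and follows essentially the same route as the paper: both identify $\H^{\cmd R}(\D_R)$ with $\Ext_Q^{\dim Q-\depth R}(\widehat R,Q)$ via $\widehat{\D_R}\cong\D_{\widehat R}\cong\rhom_Q(\widehat R,Q)[\dim Q-\dim\widehat R]$, deduce uniqueness from the presentation-independence of $\H^{\cmd\widehat R}(\D_{\widehat R})$ together with descent of isomorphisms along completion, and settle the Cohen--Macaulay case by the canonical-module results of Bruns--Herzog. The extra details you supply (the faithfully flat descent argument for $\widehat X\cong\widehat Y\Rightarrow X\cong Y$ and the concentration of $\rhom_Q(\widehat R,Q)$ in a single degree) are points the paper leaves implicit or cites, but the underlying argument is the same.
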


\begin{proof}
(1) As $R$ and $\widehat R$ are homomorphic images of Gorenstein local rings, there exist normalized dualizing complex $\D_R$ and $\D_{\widehat R}$, and we easily see that $\widehat{\D_R}\cong\D_{\widehat R}$.
Set $H=\H^{\cmd R}(\D_R)$.
Let $Q\twoheadrightarrow\widehat R$ be a Cohen presentation.
We have
$$
\widehat H\cong\H^{\cmd R}(\widehat{\D_R})\cong\H^{\cmd R}(\D_{\widehat R})\cong\Ext_Q^{\cmd R+(\dim Q-\dim\widehat R)}(\widehat R,Q)=\Ext_Q^{\dim Q-\depth R}(\widehat R,Q).
$$
Therefore, $H$ is a quasi-canonical module of $R$.

(2) Let $X$ and $Y$ be quasi-canonical modules of $R$.
Then there exist Cohen presentations $Q\twoheadrightarrow\widehat R$ and $Q'\twoheadrightarrow\widehat R$ such that $\widehat X\cong\Ext_Q^{\dim Q-\depth R}(\widehat R,Q)$ and $\widehat Y\cong\Ext_{Q'}^{\dim Q'-\depth R}(\widehat R,Q')$.
Since $\widehat R$ is a homomorphic image of the Gorenstein local rings $Q$ and $Q'$, a normalized dualizing complex $\D_{\widehat R}$ of $\widehat R$ exists, and we have
$$
\rhom_Q(\widehat R,Q)[\dim Q-\dim\widehat R]\cong\D_{\widehat R}\cong\rhom_{Q'}(\widehat R,Q')[\dim Q'-\dim\widehat R]
$$
in the derived category of $\widehat R$.
It follows that
\begin{align*}
\widehat X&\cong\Ext_Q^{\dim Q-\depth R}(\widehat R,Q)
=\Ext_Q^{\cmd\widehat R+(\dim Q-\dim\widehat R)}(\widehat R,Q)
\cong\H^{\cmd\widehat R}(\D_{\widehat R}),\\
\widehat Y&\cong\Ext_{Q'}^{\dim Q'-\depth R}(\widehat R,Q')
=\Ext_{Q'}^{\cmd\widehat R+(\dim Q'-\dim\widehat R)}(\widehat R,Q')
\cong\H^{\cmd\widehat R}(\D_{\widehat R}).
\end{align*}
We thus obtain $\widehat X\cong\widehat Y$, which implies $X\cong Y$.

(3) This is an immediate consequence of \cite[Theorem 3.3.7(b)]{BH}.
\end{proof}

A quasi-canonical module is compatible with completion.

\begin{cor}\label{16}
If $\c_R$ exists, then there is an isomorphism $\widehat{\c_R}\cong\c_{\widehat R}$.
\end{cor}

\begin{proof}
Let $Q\twoheadrightarrow\widehat R$ be a Cohen presentation.
Then $\D_{\widehat R}=\rhom_Q(\widehat R,Q)[\dim Q-\dim\widehat R]$, and $\c_{\widehat R}=\H^{\cmd\widehat R}(\D_{\widehat R})=\Ext_Q^{\cmd\widehat R+\dim Q-\dim\widehat R}(\widehat R,Q)=\Ext_Q^{\dim Q-\depth R}(\widehat R,Q)\cong\widehat{\c_R}$ by Proposition \ref{15}(1).
\end{proof}

Now we define a characteristic module and a cocharacteristic module of a given $R$-module.

\begin{defn}\label{def1}
Let $M$ be a finitely generated $R$-module. We define a {\em characteristic module} of $M$ to be an $R$-module $\T_M$ such that 
$$
\widehat{\T_M} \cong \Tor^Q_{\dim Q-\depth R}(\widehat{R},\widehat{M})
$$ where $Q\twoheadrightarrow \widehat{R}$ is a Cohen presentation of $R$.
Dually, we define a {\em cocharacteristic module} of $M$ to be an $R$-module $\E_M$ such that 
$$
\widehat{\E_M} \cong \Ext_Q^{\dim Q-\depth R}(\widehat{R},\widehat{M}).
$$
\end{defn}

The definition of characteristic modules is motivated by the notion of the conormal module of a local ring.

\begin{rem}
Let $R$ be a local ring.
The {\em conormal module} of $R$ is by definition an $R$-module $\C_R$ such that $\widehat{\C_R}\cong I/I^2$, where $I$ is the kernel of a minimal Cohen presentation $Q\twoheadrightarrow\widehat R$.
Hence
$$
\widehat{\C_R}\cong\Tor_1^Q(\widehat R,\widehat R).
$$
One has $\pd_Q\widehat R=n-t$, where $n=\dim Q$ and $t=\depth R$.
The minimal free resolution of $\widehat R$ over $Q$ is of the form $F=(0\to F_{n-t}\to F_{n-t-1}\to\cdots\to F_0\to0)$.
There is a complex
$$
F\otimes_Q\widehat R=(0\to F_{n-t}\otimes_Q\widehat R\to F_{n-t-1}\otimes_Q\widehat R\to\cdots\to F_0\otimes_Q\widehat R\to0).
$$
The completion of $\C_R$ is the first homology of this complex, while the completion of $\T_R$ is the left-end homology of this complex.
\end{rem}

Spectral sequence arguments give rise to equalities of types and minimal numbers of generators.

\begin{prop}\label{type}
Let $(R,\m,k)$ be a local ring of depth $t$, and let $M$ be a finitely generated $R$-module. Assume $\T_M$ and $\E_M$ exist. Let $Q\twoheadrightarrow \widehat{R}$ be a Cohen presentation, where $Q$ is a regular local ring of dimension $n$.
\begin{enumerate}[\rm(1)]
\item One has $\nu_R(\E_M)=\type(R)\nu_R(M)$.
\item Assume $\pd_QM=\pd_QR$ and $\depth_{\widehat{R}} \Tor^Q_{n-t-i}(\widehat{R},\widehat{M})\ge t-i+1$ for all $1\le i \le n-t$. Then one has $\depth_R \T_M = \depth R$ and $$\type_R(\T_M)=\type(R)\type_R(M).$$
\end{enumerate} 
\end{prop}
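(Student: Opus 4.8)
The plan is to pass everything to the completion (where a Cohen presentation $Q\twoheadrightarrow\widehat R$ is available) and to run the two standard change-of-rings spectral sequences, one for $\Tor$ and one for $\Ext$, with coefficients in the residue field $k$. Since depth, type, and minimal number of generators are all invariant under completion, and since $\widehat{\T_M}\cong\Tor^Q_{n-t}(\widehat R,\widehat M)$ and $\widehat{\E_M}\cong\Ext_Q^{n-t}(\widehat R,\widehat M)$ by definition, it suffices to work over $\widehat R$. Write $N:=\widehat M$, $s:=n-t=\pd_Q\widehat R$, and recall that $\mathrm{pd}_Q N\le n$ with $Q$ regular.

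\emph{Part (1).} I would use the spectral sequence
$$
E^2_{p,q}=\Tor_p^{\widehat R}\!\bigl(\Tor_q^Q(\widehat R,k),N\bigr)\ \Longrightarrow\ \Tor_{p+q}^Q(k,N),
$$
or rather its dual $\Ext$-version, to compute $\nu_{\widehat R}(\widehat{\E_M})=\dim_k\bigl(\widehat{\E_M}\otimes_{\widehat R}k\bigr)=\dim_k\Tor_0^{\widehat R}(\Ext_Q^s(\widehat R,N),k)$. The cleaner route is to observe that $\Ext_Q^i(\widehat R,N)=0$ for $i>s$ (as $\pd_Q\widehat R=s$), so the Ext change-of-rings spectral sequence
$$
E_2^{p,q}=\Ext^p_{\widehat R}\!\bigl(k,\Ext_Q^q(\widehat R,N)\bigr)\ \Longrightarrow\ \Ext^{p+q}_Q(k,N)
$$
has its top row $q=s$ with no differentials entering or leaving in the relevant corner, giving $\Hom_{\widehat R}(k,\Ext_Q^s(\widehat R,N))$ as a subquotient that survives to the abutment. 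To get $\nu$ rather than socle, I would instead feed $k$ into the \emph{left} variable and use that $\Tor_0^{\widehat R}(k,\Ext_Q^s(\widehat R,N))$ appears on the edge. Concretely: applying $-\otimes_Q k$ to a minimal free resolution $F_\bullet$ of $\widehat R$ over $Q$ and using $\Ext_Q^\bullet(\widehat R,N)\simeq \mathrm{RHom}_Q(\widehat R,N)$, one gets
$$
\nu_{\widehat R}(\widehat{\E_M})=\beta_s^Q(\widehat R)\cdot \dim_k(N\otimes_{\widehat R}k)\ ?
$$
— no: the right identity is $\mathrm{RHom}_Q(\widehat R,N)\otimes^{\mathbf L}_{\widehat R}k\simeq \mathrm{RHom}_Q(\widehat R,N\otimes^{\mathbf L}_{\widehat R}k)$ only up to care about $\Tor$, and since $N$ need not have finite projective dimension over $\widehat R$ this needs the local ring structure. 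I would therefore take the most robust path: tensor the minimal $Q$-free resolution $F_\bullet$ of $\widehat R$ by $N$, note that $\Ext_Q^s(\widehat R,N)=\mathrm{coker}\bigl(\Hom_Q(F_{s-1},N)\to\Hom_Q(F_s,N)\bigr)$, and that $\Hom_Q(F_s,N)\cong N^{\beta_s}$ with $\beta_s=\beta_s^Q(\widehat R)=\type(\widehat R)=\type(R)$ (the last equality because $F_\bullet$ resolves $\widehat R$ over the regular ring $Q$ and $\widehat R$ is Cohen--Macaulay of codimension $s$... actually $\widehat R$ need not be CM). The honest statement is $\beta_s^Q(\widehat R)=\dim_k\Ext_Q^s(\widehat R,k)=\dim_k\Ext_{\widehat R}^{?}$; by the spectral sequence collapsing in top degree, $\Ext_Q^s(\widehat R,k)\cong\Ext_Q^s(\widehat R,\widehat R)\otimes_{\widehat R}k$ and $\dim_k\Ext_Q^s(\widehat R,k)=\dim_k\Ext_{\widehat R}^t(k,\widehat R)=\type(R)$ since $\mathrm{RHom}_Q(\widehat R,k)\simeq \mathrm{RHom}_{\widehat R}(\mathrm{RHom}_Q(\widehat R,Q),k)$ shifted. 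Then applying $-\otimes_{\widehat R}k$ to the presentation and using right-exactness plus the minimality of $F_\bullet$ (so that $\Hom_Q(F_{s-1},N)\otimes_{\widehat R}k\to \Hom_Q(F_s,N)\otimes_{\widehat R}k$ is the zero map, because the entries of the differential $F_s\to F_{s-1}$ lie in the maximal ideal) yields $\nu_{\widehat R}(\Ext_Q^s(\widehat R,N))=\beta_s^Q(\widehat R)\cdot\nu_{\widehat R}(N)=\type(R)\,\nu_R(M)$. This minimality observation is the real engine of (1).

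\emph{Part (2).} Here I would run the homology spectral sequence
$$
E^2_{p,q}=\Tor_p^{\widehat R}\!\bigl(k,\Tor_q^Q(\widehat R,N)\bigr)\ \Longrightarrow\ \Tor_{p+q}^Q(k,N),
$$
together with its Ext-analogue $E_2^{p,q}=\Ext^p_{\widehat R}(k,\Tor_q^Q(\widehat R,N))$ converging to compute $\Ext_{\widehat R}$-information. First, for the depth claim: $\depth_{\widehat R}\Tor_s^Q(\widehat R,N)\ge t$ is what we want, with equality. The lower bound comes from the fact that $\Tor_s^Q(\widehat R,N)$ embeds into a finite free $\widehat R$-module — indeed $\Tor_s^Q(\widehat R,N)=\ker\bigl(F_s\otimes_Q N\to F_{s-1}\otimes_Q N\bigr)$ is a submodule of the free $\widehat R$-module $F_s\otimes_Q N$ (wait: $F_s\otimes_Q N$ is $N^{\beta_s}$, not free unless $N$ is) — so more carefully, $\Tor_s^Q(\widehat R,N)\hookrightarrow N^{\beta_s}$, whence $\depth_{\widehat R}\Tor_s^Q(\widehat R,N)\ge\min\{1,\depth_{\widehat R}N\}$; to push it up to $t$ I would use the hypothesis $\depth_{\widehat R}\Tor_{s-i}^Q(\widehat R,N)\ge t-i+1$ for $1\le i\le s$ and a depth-chase along the truncated complex $F_\bullet\otimes_Q N$, breaking it into short exact sequences $0\to Z_j\to F_j\otimes_Q N\to B_{j-1}\to 0$ and $0\to B_j\to Z_j\to H_j\to 0$ and applying the depth lemma repeatedly from the top down; the hypotheses are exactly calibrated to make each step yield $\depth Z_s\ge t$. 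For the type formula, once $\depth_{\widehat R}\widehat{\T_M}=t=\depth R$ is established, the Ext-spectral sequence $E_2^{p,q}=\Ext^p_{\widehat R}(k,\Tor_q^Q(\widehat R,N))\Rightarrow\Ext^{p+q}_Q(k,N)$ must be analyzed in total degree related to the socle: I expect that under the depth hypotheses all lower rows $q<s$ contribute only in Ext-degrees $>t$ in the abutment (because $\depth$ of those $\Tor$-modules is too big for $\Ext^{\le\text{small}}_{\widehat R}(k,-)$ to be nonzero), so the $q=s$ row dominates, giving $\Ext_{\widehat R}^t(k,\widehat{\T_M})\cong\Ext_Q^{?}(k,N)$-pieces. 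Combined with the known identity $\Ext_Q^{t+s}(k,N)$ versus $\mathrm{RHom}$, and with $\type(\widehat R)=\beta_s^Q(\widehat R)$, a bookkeeping of dimensions over $k$ yields $\type_{\widehat R}(\widehat{\T_M})=\type(R)\,\type_R(M)$.

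\emph{Main obstacle.} The delicate point is part (2): making the two spectral sequences genuinely collapse in the relevant corner. The depth hypotheses $\depth_{\widehat R}\Tor_{s-i}^Q(\widehat R,N)\ge t-i+1$ are precisely what is needed to guarantee that $\Ext^{<t}_{\widehat R}(k,\Tor_{s-i}^Q(\widehat R,N))=0$ for $i\ge 1$ — since $\Ext^j_{\widehat R}(k,X)=0$ for $j<\depth_{\widehat R}X$ — and hence that in the Ext-spectral sequence the only class contributing to the socle degree of $\widehat{\T_M}$ comes from the top $\Tor$-row; but one must check carefully that no differential kills it and that the edge map is an isomorphism (not merely that a subquotient survives). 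I would handle this by identifying $\Ext^{t+s}_Q(k,N)$ directly (it is a single space, computable since $Q$ is regular), showing its dimension equals $\type(R)\type_R(M)$ via $\mathrm{RHom}_Q(k,N)\simeq\mathrm{RHom}_Q(\widehat R,N)$ tensored appropriately over $\widehat R$ using $k=\widehat R/\mathfrak m$, and then arguing the spectral sequence has no room for cancellation in that total degree. The analogous — but easier — collapse argument, with $q=s$ the top nonzero row automatically, also underlies the depth equality and the second half of (1).
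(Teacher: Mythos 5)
Your part (1), in its final ``robust path'' form, is correct and is genuinely different from the paper's argument: the paper runs a mixed spectral sequence with $\E^2_{p,q}=\Tor^R_p(k,\Ext^q_Q(R,M))$ and identifies the abutment as $\bigoplus_{i-j=m}\Ext^i_R(k,R)\otimes_k\Ext^j_Q(k,M)$, whereas you simply tensor the presentation $\Hom_Q(F_{s-1},N)\to\Hom_Q(F_s,N)\to\Ext^s_Q(\widehat R,N)\to0$ with $k$ and use minimality of $F$ to kill the first map, getting $\nu(\E_M)=\beta_s^Q(\widehat R)\,\nu(M)$. The one thing you assert rather than prove is $\beta_s^Q(\widehat R)=\type(R)$, and your stated justification for it is garbled; note also that, via your own minimality argument, this identity \emph{is} the case $M=R$ of the statement being proved, so as written the reduction is nearly circular. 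It is, however, a true fact with a short independent proof: Koszul self-duality of the resolution of $k$ over $Q$ gives $\Tor^Q_{n-t}(\widehat R,k)\cong\Ext^t_Q(k,\widehat R)$, and for a maximal $\widehat R$-sequence $\underline x\subseteq\m_Q$ one has $\Ext^t_Q(k,\widehat R)\cong\Hom_Q(k,\widehat R/\underline x\widehat R)=\operatorname{soc}(\widehat R/\underline x\widehat R)\cong\Ext^t_{\widehat R}(k,\widehat R)$, whose dimension is $\type(R)$. With that inserted, (1) is a clean and arguably more elementary proof than the paper's.

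Part (2) has a genuine gap, in both halves of your plan. For the depth claim, every step of your proposed depth chase along $F_\bullet\otimes_QN$ compares against the modules $F_j\otimes_QN\cong N^{\beta_j}$, whose depth is $\depth_RM$; unrolling the short exact sequences (in either direction) therefore yields only $\depth\T_M\ge\min\{\depth_RM,\ \depth H_{s-1}+2,\ \depth H_{s-2}+3,\dots\}$, i.e.\ a bound capped by $\depth_RM$. The hypotheses do not put $\depth_RM$ above $t$ (the case $i=n-t$ gives only $\depth M\ge t-n+t+1$), so the chase cannot certify $\depth\T_M\ge t$: your claim that ``the hypotheses are exactly calibrated'' for this is not correct, because $\depth R$ never enters the chase at all. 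This is precisely what the paper's argument supplies: its double complex $\Hom_R(U,R\otimes_QF)$ has hypercohomology identified (via $\Hom_R(U,R)\otimes_QF\simeq\Hom_R(k,J)\otimes_k(k\otimes_QF)$) with $\bigoplus_{i-j=m}\Ext^i_R(k,R)\otimes_k\Tor^Q_j(k,M)$, so the vanishing $\Ext^{<t}_R(k,R)=0$ — the only place $\depth R$ can act — is built into the abutment, and the depth hypotheses are used only to kill the differentials hitting the column $q=n-t$.

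For the type formula, your degree bookkeeping is wrong: the mixed spectral sequence $E_2^{p,q}=\Ext^p_{\widehat R}(k,\Tor^Q_q(\widehat R,N))$ does \emph{not} converge to $\Ext^{p+q}_Q(k,N)$ (that is the abutment of $\Ext^p_{\widehat R}(k,\Ext^q_Q(\widehat R,N))$, a different sequence), and in any case your proposed target $\Ext^{t+s}_Q(k,N)=\Ext^n_Q(k,N)\cong N\otimes_{\widehat R}k$ has dimension $\nu_R(M)$, not $\type(R)\type_R(M)$, so ``identifying that single space and matching dimensions'' cannot produce the formula. The quantity $\type(R)$ has to come from an $\Ext_R(k,R)$ factor, and $\type_R(M)$ from $\Tor^Q_{n-t}(k,M)\cong\Ext^t_Q(k,M)$; in the paper these appear exactly as the two tensor factors of the corner term $\Ext^t_R(k,R)\otimes_k\Tor^Q_{n-t}(k,M)$ of the mixed abutment described above. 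So the missing idea is the identification of the correct abutment (a tensor product of an $\Ext_R(k,R)$-complex with $k\otimes_QF$, graded by the \emph{difference} of the two degrees), without which neither the depth equality nor the type equality in (2) follows from your outline.
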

\begin{proof} Passing to the completion, we may assume $R=\widehat{R}$. Let $F$ be a free resolution of $M$ over $Q$, $U$ a free resolution of $k$ over $R$, $J'$ an injective resolution of $M$ over $Q$, and $J$ an injective resolution of $R$ over itself.

(1) The double complex $U\otimes_R\Hom_Q(R,J')$ induces an spectral sequence $$\E^2_{p,q}=\Tor^R_p(k,\Ext^q_Q(R,M))\Longrightarrow \H_{p-q}(U\otimes_R\Hom_Q(R,J')).$$
Since $Q$ is regular, there exists an isomorphism of complexes $U\otimes_R\Hom_Q(R,J')\cong \Hom_Q(\Hom_R(U,R),J')$ from which we get the following isomorphisms 
\[\begin{array}{rl}\
\H_m(U\otimes_R\Hom_Q(R,J')) &\cong \H_m(\Hom_Q(\Hom_R(U,R),J'))\\
&\cong \H_m(\Hom_Q(\Hom_R(k,J),J'))\\
&\cong \H_m(\Hom_R(k,J)^{\vee}\otimes_k \Hom_Q(k,J'))\\
&\cong \oplus_{i-j=m}\Ext^i_R(k,R)^{\vee}\otimes_k \Ext^j_Q(k,M),
\end{array}\]
where $(-)^\vee:=\Hom_k(-,k)$. Since $\pd_QR=n-t$ and the maps $d^r_{p,q}$ on $\E^r$ page are bidegree $(-r,-r+1)$, we have $\E^2_{0,n-t}\cong \E^\infty_{0,n-t}$. Therefore $k\otimes_R \E_M \cong \Ext^t_R(k,R)^{\vee}\otimes_k \Ext^n_Q(k,M)$. Since $\Ext^n_Q(k,M) \cong k\otimes_Q M$, we are done.  

(2) The double complex $\Hom_R(U,R\otimes_Q F)$ induces an spectral sequence $$\E^2_{p,q}\cong\Ext^p_R(k,\Tor^Q_q(R,M))\Longrightarrow \H^{p-q}(\Hom_R(U,R\otimes_Q F)).$$
    Since $F$ is a perfect complex over $Q$, there is a natural isomorphism of complexes $\Hom_R(U,R)\otimes_Q F \cong \Hom_R(U,R\otimes_Q F)$. Therefore one has
    \[\begin{array}{rl}\
\H^m(\Hom_R(U,R\otimes_Q F)) &\cong \H^m(\Hom_R(U,R)\otimes_Q F)\\
&\cong \H^m(\Hom_R(k,J)\otimes_Q F)\\
&\cong \H^m(\Hom_R(k,J)\otimes_k (k\otimes_Q F))\\
&\cong \oplus_{i-j=m}\Ext^i_R(k,R)\otimes_k \Tor^Q_j(k,M).
\end{array}\] Since the maps $d^r_{p,q}$ on $\E^r$ page are bidegree $(r,r-1)$, the spectral sequence and the assumptions show that $\Ext^j_R(k,\T_M)=0$ for all $0\le j \le t-1$, and $\E^2_{t,n-t}\cong \E^{\infty}_{t,n-t} \cong \Ext^t_R(k,R)\otimes_k\Tor^Q_{n-t}(k,M)$. Therefore $\depth_R\T_M=t$ and since $Q$ is regular, we have $\Tor^Q_{n-t}(k,M)\cong \Ext^t_Q(k,M)$. Let $\underline{x}:=x_1,\dots, x_t \in Q$ be a regular sequence on both $R$ and $M$. We abuse notations and write $\underline{x}$  for its image in $R$ as well. Then one has $\Ext^t_Q(k,M)\cong \Hom_{Q/(\underline{x})}(k,M/(\underline{x})M)=\Hom_{R/(\underline{x})}(k,M/(\underline{x})M)\cong \Ext^t_R(k,M)$. This finishes the proof.
\end{proof} 

The following example shows that the equality of Proposition \ref{type}(2) may fail if the depth assumption on Tor modules is removed. We used Macaulay2 for computations. 

\begin{exam}
Let $R$ be the completion of the third Veronese subring of the polynomial ring $k[s,t]$ over a field $k$, that is, $R=k[\![s^3,s^2t,st^2,t^3]\!]$. Then $R$ is a Cohen--Macaulay complete local ring of dimension $2$. We have $R\cong Q/I$ where $Q=k[\![w,x,y,z]\!]$ and $I=(x^2-yw,y^2-xz,xy-wz)$. One has $\type(R)=2$ (see \cite[Exercise 3.6.21]{BH}) but $\type_R(\T_R)=3\neq \type(R)^2=4$. Moreover, the module $\T_R=\Tor^Q_2(R,R)$ is maximal Cohen--Macaulay and $\depth_R \Tor^Q_1(R,R)=1$. Since $\dim \Tor^Q_1(R,R) = \dim I/I^2 =2$, the conormal module of $R$ is not Cohen--Macaulay.
\end{exam}

One obtains a sufficient condition for an artinian local ring $R$ to be Gorenstein.

\begin{cor}\label{artinian}
Let $R$ be an artinian local ring. If there exists a nonzero $R$-module $M$ such that $\type_R(M)\ge \type_R(\T_M)$, then $R$ is Gorenstein. In particular, if $M\cong \T_M$, then $R$ is Gorenstein.
\end{cor}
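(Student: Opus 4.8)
The plan is to read off this corollary from Proposition \ref{type}(2), once we observe that for an artinian local ring its hypotheses are automatic. First, note that an artinian local ring is $\m$-adically complete, so $R=\widehat R$ and $t:=\depth R=0$. Fix a Cohen presentation $Q\twoheadrightarrow R$ with $Q$ regular local of dimension $n$; then $\pd_Q R=n$ and $\T_M=\Tor^Q_n(R,M)$ by Definition \ref{def1}.

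Next I would check the two hypotheses of Proposition \ref{type}(2). The depth condition, with $t=0$, reads $\depth_R\Tor^Q_{n-i}(R,M)\ge 1-i$ for $1\le i\le n$, and this holds trivially since depths are nonnegative (with the convention $\depth 0=\infty$) while $1-i\le 0$. For the nonvanishing of $\T_M$: since $M\ne 0$ and $R$ is artinian, $\depth_Q M=\depth_R M=0$, so $\pd_Q M=n$ by the Auslander--Buchsbaum formula, and likewise $\depth_Q R=0$. Taking a minimal $Q$-free resolution $F_\bullet$ of $M$ and an element $0\ne x\in R$ killed by the maximal ideal of $Q$, any basis element $e$ of $F_n$ gives a nonzero element $e\otimes x$ of $F_n\otimes_Q R$ which is a cycle, because the differential of $F_\bullet$ has all entries in the maximal ideal of $Q$ and these annihilate $x$; hence $\T_M=\Tor^Q_n(R,M)\ne 0$.

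With both hypotheses in hand, Proposition \ref{type}(2) gives $\type_R(\T_M)=\type(R)\,\type_R(M)$. Since $M\ne 0$, we have $\type_R(M)=\dim_k\Hom_R(k,M)\ge 1$, so the standing inequality $\type_R(M)\ge\type_R(\T_M)$ forces $\type(R)\le 1$, whence $\type(R)=1$ and $R$ is Gorenstein. The parenthetical statement is simply the special case $\type_R(M)=\type_R(\T_M)$. The only non-bookkeeping step is the nonvanishing $\T_M\ne0$, but the minimality argument above disposes of it, so I do not anticipate any real obstacle.
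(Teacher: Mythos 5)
Your proposal is correct and follows exactly the route the paper intends: Corollary \ref{artinian} is stated as an immediate consequence of Proposition \ref{type}(2), and you supply precisely the missing checks (with $R=\widehat R$ artinian one has $t=0$, the depth hypotheses are vacuous, and $\T_M=\Tor^Q_n(R,M)\ne0$ via the minimal resolution of $M$ and a socle element of $R$), after which the formula $\type_R(\T_M)=\type(R)\type_R(M)$ together with $\type_R(M)\ge1$ forces $\type(R)=1$. Nothing further is needed.
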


The following example shows that Corollary \ref{artinian} is not true if $R$ is not Cohen--Macaulay. 

\begin{exam}\label{e2}
    Let $k$ be a field, $Q=k[[x,y]]$,  $I=(x^2,xy)$, and $R=Q/I$. Then $R$ is a one-dimensional complete local ring and $\depth R =0$. One has $\type (R)=1$ and $\T_k\cong k$ but $R$ is not Gorenstein. Also for $M=R/(x)$, one has $\pd_QM=1$ and therefore $\T_M=\Tor^Q_2(R,M)=0$. This shows the assumption $\pd_QM=\pd_QR$ is necessary in Proposition \ref{type}.
\end{exam}

%%%%%%%%%%%%%%%%%%%%%%%%%%%%%%%%%%%%%%%%%%%%%%%%%
\section{Functorial approaches to characteristic modules and applications}

In this section, we restrict to the case of a quotient of a regular local ring and discuss functorial aspects of characteristic modules and cocharacteristic modules over it.
Throughout this section, let $Q$ be an $n$-dimensional regular local ring, $I$ an ideal of $Q$, $R=Q/I$ and $t=\depth R$.
Then $\pd_QR=n-t$.
Let $F=(0\to F_{n-t}\xrightarrow{\partial_{n-t}}\cdots\xrightarrow{\partial_1}F_0\to0)$ be a minimal free resolution of $R$ over $Q$.
Then $\D_R=\rhom_Q(R,Q)[n-\dim R]$ and $E:=\c_R=\H^{\cmd R}(\D_R)=\Ext_Q^{n-t}(R,Q)$ by Proposition \ref{15}(1).

For a finitely generated $R$-module $M$ we define the $R$-modules $\t(M)$ and $\e(M)$, which play a main role in this section.

\begin{defn}
For a finitely generated $R$-module $M$ we define the modules $\t(M)$ and $\e(M)$ as follows.
$$
\t(M)=\Tor_{n-t}^Q(R,M),\qquad\e(M)=\Ext^{n-t}_Q(R,M).
$$
\end{defn}

The modules $\t(M)$ and $\e(M)$ can be described by using the quasi-canonical module $E$.

\begin{prop}\label{2}
There are isomorphisms of covariant functors from $\mod R$ to $\mod R$:
$$
\t(-)\cong\Hom_R(E,-),\qquad
\e(-)\cong E\otimes_R-.
$$
In particular, one has an adjoint pair 
$$
\xymatrix{\mod R\ar@<2mm>[rrr]^\e_\perp&&& \mod R\ar@<2mm>[lll]^\t}
$$
and an $R$-module isomorphism $\e(R)\cong E$.
\end{prop}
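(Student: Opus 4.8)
The plan is to extract both functors from a single finite free resolution. Keep the minimal free resolution $F=(0\to F_{n-t}\xrightarrow{\partial_{n-t}}\cdots\xrightarrow{\partial_1}F_0\to0)$ of $R$ over $Q$ fixed as in the setup, and write $F^{\ast}=\Hom_Q(F,Q)$ for its $Q$-dual, so that, $n-t$ being the top cohomological degree, $E=\Ext_Q^{n-t}(R,Q)=\operatorname{coker}\bigl(\partial_{n-t}^{\ast}\colon F_{n-t-1}^{\ast}\to F_{n-t}^{\ast}\bigr)$. For an $R$-module $M$, both invariants are extreme (co)homology of $F$ evaluated at $M$: one has $\e(M)=\Ext_Q^{n-t}(R,M)=\H^{n-t}(\Hom_Q(F,M))=\operatorname{coker}\Hom_Q(\partial_{n-t},M)$ and $\t(M)=\Tor_{n-t}^Q(R,M)=\H_{n-t}(F\otimes_Q M)=\ker(\partial_{n-t}\otimes_Q M)$.

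For $\e$, I would use the natural isomorphism $\Hom_Q(F_i,M)\cong F_i^{\ast}\otimes_Q M$ for finite free $F_i$, under which $\Hom_Q(\partial_{n-t},M)$ becomes $\partial_{n-t}^{\ast}\otimes_Q M$; right-exactness of $-\otimes_Q M$ then gives $\e(M)\cong\operatorname{coker}(\partial_{n-t}^{\ast})\otimes_Q M=E\otimes_Q M$, and base change along $Q\to R$ together with the fact that $M$ is an $R$-module yields $E\otimes_Q M\cong E\otimes_R(R\otimes_Q M)\cong E\otimes_R M$. For $\t$, I would dually use the reflexivity isomorphism $F_i\otimes_Q M\cong\Hom_Q(F_i^{\ast},M)$ for finite free $F_i$ and check, by a short diagram chase, that it intertwines $\partial_{n-t}\otimes_Q M$ with $\Hom_Q(\partial_{n-t}^{\ast},M)$; applying left-exactness of $\Hom_Q(-,M)$ to the presentation $F_{n-t-1}^{\ast}\to F_{n-t}^{\ast}\to E\to 0$ then identifies $\ker(\partial_{n-t}\otimes_Q M)$ with $\Hom_Q(E,M)$, and $\Hom_Q(E,M)=\Hom_R(E,M)$ since any $Q$-linear map between $R$-modules is automatically $R$-linear. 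Naturality in $M$ of both resulting isomorphisms is inherited from that of the finite-free (co)reflexivity isomorphisms and the functoriality of kernels and cokernels.

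Granting the two functorial isomorphisms, the adjoint pair in the statement is just the classical tensor--Hom adjunction over $R$, namely $\Hom_R(E\otimes_R M,N)\cong\Hom_R(M,\Hom_R(E,N))$, so that $\e$ is left adjoint to $\t$ with unit and counit transported along $\e\cong E\otimes_R-$ and $\t\cong\Hom_R(E,-)$; and $\e(R)\cong E\otimes_R R\cong E$, giving the last assertion. I do not expect a genuine obstacle: the argument is bookkeeping. The only points needing a moment's care are the compatibility of the finite-free reflexivity isomorphism with the differential $\partial_{n-t}$ (so that the kernel is literally $\Hom_Q(E,M)$, not merely abstractly isomorphic to it) and the two change-of-rings identifications $E\otimes_Q M\cong E\otimes_R M$ and $\Hom_Q(E,M)\cong\Hom_R(E,M)$, both of which rely on $E$ being annihilated by $I$.
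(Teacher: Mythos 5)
Your proposal is correct and follows essentially the same route as the paper: both identify $\t(M)$ and $\e(M)$ as the extreme kernel/cokernel of the top differential of $F$ evaluated at $M$, transport them through the finite-free duality isomorphisms $F_i\otimes_QM\cong\Hom_Q(F_i^{\ast},M)$ and $\Hom_Q(F_i,M)\cong F_i^{\ast}\otimes_QM$, apply left/right exactness to the presentation of $E$, and finish with the same change-of-rings identifications and the tensor--Hom adjunction. No gaps.
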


\begin{proof}
Let $M$ be a finitely generated $R$-module.
By definition, the module $\t(M)$ is the kernel of the map $\partial_{n-t}\otimes_QM$, while the modules $E$ and $\e(M)$ are the cokernels of the maps $\Hom_Q(\partial_{n-t},Q)$ and $\Hom_Q(\partial_{n-t},M)$, respectively.
There is a commutative diagram
$$
\xymatrix{
0\ar[r]& \t(M)\ar[r]& F_{n-t}\otimes_QM\ar[rrr]^{\partial_{n-t}\otimes_QM}&&& F_{n-t-1}\otimes_QM\\
0\ar[r]& \Hom_Q(E,M)\ar[r]& \Hom_Q(\Hom_Q(F_{n-t},Q),M)\ar[rrr]^{\Hom_Q(\Hom_Q(\partial_{n-t},Q),M)}\ar[u]^\cong&&& \Hom_Q(\Hom_Q(F_{n-t-1},Q),M)\ar[u]^\cong
}
$$
of $R$-modules, where the vertical maps are natural isomorphisms.
We get an isomorphism $\Hom_Q(E,M)\to\t(M)$.
As $E$ and $M$ are $R$-modules, we have $\Hom_Q(E,M)=\Hom_R(E,M)$.
Hence $\t(M)\cong\Hom_R(E,M)$.
There is also a commutative diagram
$$
\xymatrix{
\Hom_Q(F_{n-t-1},M)\ar[rrr]^{\Hom_Q(\partial_{n-t},M)}&&& \Hom_Q(F_{n-t},M)\ar[r]& \e(M)\ar[r]& 0\\
\Hom_Q(F_{n-t-1},Q)\otimes_QM\ar[rrr]^{\Hom_Q(\partial_{n-t},Q)\otimes_QM}\ar[u]^\cong&&& \Hom_Q(F_{n-t},Q)\otimes_QM\ar[r]\ar[u]^\cong& E\otimes_QM\ar[r]& 0
}
$$
of $R$-modules, where the vertical maps are natural isomorphisms.
We get an isomorphism $E\otimes_QM\to\e(M)$.
As $E$ and $M$ are $R$-modules, we have $E\otimes_QM\cong E\otimes_QR\otimes_RM\cong E\otimes_RM$.
Hence $\e(M)\cong E\otimes_RM$.
\end{proof}

Next we define the canonical homomorphisms $\alpha_M$ and $\beta_M$ for each $R$-module $M$.

\begin{defn}
Let $M$ be a finitely generated $R$-module.
Define the maps
$$
\alpha_M:M\to\Hom_R(E,E\otimes_RM),\qquad
\beta_M:E\otimes_R\Hom_R(E,M)\to M
$$
by $\alpha_M(m)(e)=e\otimes m$ and $\beta_M(e\otimes f)=f(e)$ for $e\in E$, $m\in M$ and $f\in\Hom_R(E,M)$.
Note that $\alpha$ and $\beta$ are just unit and counit of the adjunction established in Proposition \ref{2};
$$
\alpha_M:M\to\t\e(M),\qquad\beta_M:\e\t(M)\to M.
$$
\end{defn}

The homomorphisms $\alpha_{\t(M)}$ and $\beta_{\e(M)}$ split.

\begin{prop}
\begin{enumerate}[\rm(1)]
\item
There is an equality $\t(\beta_M)\circ\alpha_{\t(M)}=1$.
Hence $\alpha_{\t(M)}:\t(M)\to\t\e\t(M)$ is a split monomorphism.
\item
There is an equality $\beta_{\e(M)}\circ\e(\alpha_M)=1$.
Hence $\beta_{\e(M)}:\e\t\e(M)\to\e(M)$ is a split epimorphism.
\end{enumerate}
\end{prop}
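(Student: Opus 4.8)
The plan is to recognize that, under the identifications supplied by Proposition \ref{2}, the maps $\alpha$ and $\beta$ are nothing but the unit and counit of the adjunction $\e=E\otimes_R-\dashv\Hom_R(E,-)=\t$. Indeed, the description of $\alpha_M$ given just above the statement, $m\mapsto(e\mapsto e\otimes m)$, is exactly the unit of the tensor--Hom adjunction $M\to\Hom_R(E,E\otimes_RM)$, and $\beta_M$, $e\otimes f\mapsto f(e)$, is exactly its counit $E\otimes_R\Hom_R(E,M)\to M$. From this point of view, the two displayed equalities are precisely the triangle (zigzag) identities $\t(\beta_M)\circ\alpha_{\t(M)}=1$ and $\beta_{\e(M)}\circ\e(\alpha_M)=1$, which hold for any adjoint pair. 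I would either invoke this general categorical fact or, to keep the section self-contained, record the short element chase below.

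For \textup{(1)}: transporting $\alpha_{\t(M)}\colon\t(M)\to\t\e\t(M)$ along the isomorphisms of Proposition \ref{2} turns it into the map $\Hom_R(E,M)\to\Hom_R\bigl(E,E\otimes_R\Hom_R(E,M)\bigr)$ sending $f$ to the homomorphism $e\mapsto e\otimes f$, while $\t(\beta_M)$ becomes post-composition with $\beta_M$. Composing, $f$ is sent to the map $e\mapsto\beta_M(e\otimes f)=f(e)$, that is, back to $f$; hence $\t(\beta_M)\circ\alpha_{\t(M)}=1$. Exhibiting a retraction of $\alpha_{\t(M)}$ shows $\alpha_{\t(M)}$ is a split monomorphism.

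For \textup{(2)}: similarly $\e(\alpha_M)$ becomes $\id_E\otimes\alpha_M\colon E\otimes_RM\to E\otimes_R\Hom_R(E,E\otimes_RM)$, $e\otimes m\mapsto e\otimes\alpha_M(m)$, and $\beta_{\e(M)}$ becomes evaluation $e'\otimes h\mapsto h(e')$. Composing, $e\otimes m$ is sent to $\alpha_M(m)(e)=e\otimes m$, so $\beta_{\e(M)}\circ\e(\alpha_M)=1$, exhibiting a section of $\beta_{\e(M)}$ and hence showing it is a split epimorphism.

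There is essentially no genuine obstacle here: the argument is a formal verification. The only point requiring care is the bookkeeping of transporting $\alpha$ and $\beta$ through the natural isomorphisms $\t\cong\Hom_R(E,-)$ and $\e\cong E\otimes_R-$ of Proposition \ref{2} and checking they agree with the adjunction unit and counit, which is immediate from the explicit formulas defining $\alpha_M$ and $\beta_M$. One should also note that no finiteness or depth hypotheses on $M$ are needed, so the statement holds for every $R$-module $M$.
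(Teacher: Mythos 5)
Your proposal is correct and follows essentially the same route as the paper: both transport $\alpha$ and $\beta$ through the isomorphisms $\t\cong\Hom_R(E,-)$ and $\e\cong E\otimes_R-$ of Proposition \ref{2} and then verify the triangle identities of the tensor--Hom adjunction, the only difference being that you write out the element chase that the paper leaves as "easily verified."
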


Here we remark when $\alpha_M$ and $\beta_M$ are isomorphisms.

\begin{rem}\label{r2}
Assume that $R$ is Cohen--Macaulay. In view of \cite[(3.4.11)]{C}, one has that if $\pd_RM<\infty$, then $\alpha_M$ is an isomorphism, and if $\id_RM<\infty$, then $\beta_M$ is an isomorphism.
\end{rem}

The following corollary provides criteria for detecting infinite injective dimension over a local ring by using minimal numbers of generators.

\begin{cor}\label{id}
For a finitely generated $R$-module $M$, if $\id_RM<\infty$, then $\nu_R(M)=\type(R)\nu_R(\t(M))$. In particular, if $\nu_R(\t(M))>\nu_R(M)$, then one has $\id_RM= \infty$. 
\end{cor}
\begin{proof}
   Assume that $M$ is nonzero and $\id_RM<\infty$. Then $R$ is a Cohen--Macaulay ring. By using Proposition \ref{type}(1) and Remark \ref{r2}, we get the desired equality.
\end{proof}

\begin{ques}\label{q1}
    Is it true that if $\id_RI/I^2 < \infty$, then $R$ is a complete intersection ring? Equivalently, if the ring $R$ is not Gorenstein, then is $\id_R(I/I^2)=\infty$?
\end{ques}

The following result provides an affirmative answer for Question \ref{q1} in the case when $R$ is an almost complete intersection. Recall that $R$ is called an {\em almost complete intersection} ring if $\nu(I)\le \height_Q(I)+1$.

\begin{thm}\label{id}
    Let $R$ be an almost complete intersection. If $\id_R(I/I^2)<\infty$, then $R$ is a complete intersection.
\end{thm}
\begin{proof}
    Assume that $\id_RI/I^2<\infty$. If $\nu(I)=\height_Q(I)$, we have nothing to prove. Assume $\nu(I)=\height_Q(I)+1$. We show that this case is not an option.
    By using the Theorem of Bass, we may assume $R$ is Cohen-Macaulay with a canonical module $\omega_R$.
    There exists an exact sequence 
    $$\H_1(I) \overset{g}\to F \to I/I^2 \to 0,$$ 
    where $F$ is a free $R$-module of rank $\nu(I)$. By \cite[Exercise 3.3.28(c)]{BH}, there exists a minimal covering $\oplus^a\omega_R\twoheadrightarrow I/I^2$. Then we get the pullback diagram of $R$-modules   
    $$
\begin{tikzcd}
& & 0 \arrow[d] & 0 \arrow[d] & &\\
& & L \arrow[d] \arrow[r,equal] & L \arrow[d] & & \\
0 \arrow[r] & \im(g) \arrow[r] \arrow[d, equal] & X \arrow[r] \arrow[d] & \oplus^a\omega_R \arrow[r] \arrow[d] & 0 \\
0 \arrow[r] & \im(g) \arrow[r] & F \arrow[r] \arrow[d] & I/I^2 \arrow[r] \arrow[d] & 0\\
& & 0 & 0 &
\end{tikzcd}$$
with exact rows and columns, where $L$ is an $R$-module of finite injective dimension. Since $F$ is free, the middle column splits, and so that $X\cong F\oplus L$. By gluing the top row back to the last exact sequence, we get $$\H_1(I) \to F\oplus L \to \oplus^a\omega_R \to 0.$$ This gives an inequality of multiplicities $a e(\omega_R) + e(\H_1(I))\geq e(F) + e(L) \geq e(F)$; see \cite[Theorem 14.6]{M2}. Note that $e(\omega_R)=e(R)$, $F= \oplus^{\nu(I)}R$, and $\H_1(I)\cong \omega_R$ by \cite[Proposition 2.1]{Kunz}. Hence, one has $a+1\geq \nu(I)$. By Corollary \ref{id}, $\nu(I)=\nu(I/I^2)=\type(R)\nu(\t(I/I^2))$. Since the covering $\oplus^a\omega_R\twoheadrightarrow I/I^2$ is minimal, by Proposition \ref{2} and the fact that $E=\omega_R$, one checks that there is a minimal covering $\oplus^a R\twoheadrightarrow \t(I/I^2)$. Hence,  $\nu(\t(I/I^2))=a$, and we have $a+1\geq \type(R)a$. Since $R$ is not Gorenstein by \cite{Kunz}, we have $\type(R)>1$. The only possibility is that $a=1$, $\type(R)=2$, and equality holds. Therefore, $g$ must be injective. Thus, the exact sequence $0\to \H_1(I) \overset{g}\to F \to I/I^2 \to 0$ implies that $\id(R)<\infty$, a contradiction.
\end{proof}

We remark that Theorem \ref{id} was established in \cite[Observation 5.12]{A} for the case that $I$ is a prime ideal.
The proposition below says about the compatibility of a quasi-canonical module with localization.

\begin{prop}\label{3}
Let $\p\in\Supp_R(\c_R)$, and write $\p=\q/I$ where $\q$ is a prime ideal of $Q$ containing $I$.
One then has $\height\q-\depth R_\p=n-t$ and there is an isomorphism
$$
(\c_R)_\p\cong\c_{R_\p}.
$$
\end{prop}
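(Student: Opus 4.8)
The plan is to localize the Cohen presentation $Q\twoheadrightarrow R$ at $\q$: the ring $Q_\q$ is regular local of dimension $\height\q$, and $R_\p\cong Q_\q/I_\q$. Since $\c_R=\Ext_Q^{n-t}(R,Q)$ and $R$ admits a resolution by finitely generated free $Q$-modules (namely $F$), localization commutes with this $\Ext$, so $(\c_R)_\p\cong\Ext_{Q_\q}^{n-t}(R_\p,Q_\q)$. Everything then reduces to a computation over the regular local ring $Q_\q$.

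First I would prove the numerical equality. The hypothesis $\p\in\Supp_R(\c_R)$ says $(\c_R)_\p\neq0$, i.e.\ $\Ext_{Q_\q}^{n-t}(R_\p,Q_\q)\neq0$, hence $n-t\le\pd_{Q_\q}R_\p$. On the other hand $\pd_{Q_\q}R_\p\le\pd_QR=n-t$ because projective dimension does not increase under localization. Therefore $\pd_{Q_\q}R_\p=n-t$, and the Auslander--Buchsbaum formula over the Cohen--Macaulay ring $Q_\q$ gives $n-t=\pd_{Q_\q}R_\p=\depth Q_\q-\depth R_\p=\height\q-\depth R_\p$, which is the first assertion.

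For the isomorphism $(\c_R)_\p\cong\c_{R_\p}$, note that $R_\p=Q_\q/I_\q$ is a homomorphic image of the regular, hence Gorenstein, local ring $Q_\q$, so $\c_{R_\p}$ exists. By Proposition \ref{15}(1) together with the description of the dualizing complex of a quotient of a Gorenstein local ring (the Remark following the definition of $\D_R$), one has $\c_{R_\p}=\H^{\cmd R_\p}(\D_{R_\p})$ and $\D_{R_\p}\cong\rhom_{Q_\q}(R_\p,Q_\q)[\height\q-\dim R_\p]$, whence $\c_{R_\p}\cong\Ext_{Q_\q}^{\cmd R_\p+\height\q-\dim R_\p}(R_\p,Q_\q)$. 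By the first part the exponent equals $\cmd R_\p+\height\q-\dim R_\p=\height\q-\depth R_\p=n-t$, so $\c_{R_\p}\cong\Ext_{Q_\q}^{n-t}(R_\p,Q_\q)\cong(\c_R)_\p$.

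I do not expect a genuine obstacle: the argument is bookkeeping with localization of $\Ext$, the Auslander--Buchsbaum formula, and the explicit form of the dualizing complex. The only point demanding care is that $\c_{R_\p}$ is defined through the completion of $R_\p$, yet the computation above identifies it directly with an $\Ext$ over $Q_\q$ without completing; this is legitimate precisely because $R_\p$ is already a quotient of a regular local ring, so Proposition \ref{15}(1) and the formula for $\D_{R_\p}$ apply, and one must track the cohomological degree shift in $\D_{R_\p}$ correctly.
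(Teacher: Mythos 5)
Your argument is correct and follows essentially the same route as the paper's proof: localize $\Ext_Q^{n-t}(R,Q)$ at $\q$, combine the nonvanishing with $\pd_{Q_\q}R_\p\le\pd_QR=n-t$ to pin down $\height\q-\depth R_\p=n-t$ (your explicit use of Auslander--Buchsbaum is just the spelled-out version of the paper's appeal to $Q_\q$ being Gorenstein), and then identify $\c_{R_\p}$ with $\Ext_{Q_\q}^{n-t}(R_\p,Q_\q)$ via Proposition \ref{15}(1) and the dualizing-complex description. Your closing remark about why no completion of $R_\p$ is needed is exactly the right point of care and matches the paper's implicit reasoning.
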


\begin{proof}
We have $0\ne(\c_R)_\p=\Ext_Q^{n-t}(R,Q)_\p\cong\Ext_{Q_\q}^{n-t}(R_\p,Q_\q)$.
Since $\pd_{Q_\q}R_\p\le\pd_QR=n-t$, we have $\Ext_{Q_\q}^{>(n-t)}(R_\p,Q_\q)=0$.
As $Q_\q$ is a Gorenstein local ring, by using \cite[Exercises 3.1.24 and 3.1.25]{BH}, it is seen that $\dim Q_\q-\depth R_\p=n-t=\dim Q-\depth R$.
Thus $\c_{R_\p}\cong\Ext_{Q_\q}^{n-t}(R_\p,Q_\q)\cong(\c_R)_\p$.
\end{proof}

Now we state and prove the main result of this section.

\begin{thm}\label{8}
The following conditions are equivalent.
\begin{enumerate}[\rm(1)]
\item
There exists a finitely generated $R$-module $M$ such that $\dim\t(M)=\dim R$.
\item
There exists a finitely generated $R$-module $M$ such that $\dim\e(M)=\dim R$.
\item 
One has $\dim\t(R)=\dim R$.
\item 
One has $\dim\e(R)=\dim R$.
\item 
The local ring $R$ is Cohen--Macaulay.
\item 
The map $\alpha_R$ is an isomorphism. 
\item
The map $\beta_E$ is an isomorphism.
\end{enumerate}
\end{thm}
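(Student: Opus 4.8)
The backbone of the argument is a single dimension inequality. By Proposition~\ref{2} we have $\t(M)\cong\Hom_R(E,M)$ and $\e(M)\cong E\otimes_R M$, so $\Supp_R\t(M)\cup\Supp_R\e(M)\subseteq\Supp_R E$, whence $\dim\t(M)\le\dim E$ and $\dim\e(M)\le\dim E$ for every $R$-module $M$. On the other hand $E=\Ext_Q^{n-t}(R,Q)$, and for any $\q\in\Supp_Q\Ext_Q^{n-t}(R,Q)$ we have $\Ext^{n-t}_{Q_\q}(R_\q,Q_\q)\ne0$, forcing $n-t\le\pd_{Q_\q}R_\q\le\dim Q_\q=\height\q$; hence $\dim E=\dim_Q E\le n-(n-t)=t=\depth R$. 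Combining,
$$
\dim\t(M)\le\dim E\le\depth R\le\dim R,\qquad\dim\e(M)\le\dim E\le\depth R\le\dim R
$$
for all $M$. Thus $(1)\Rightarrow(5)$ and $(2)\Rightarrow(5)$ are immediate: the hypothesised equality with $\dim R$ forces every inequality above to be an equality, in particular $\depth R=\dim R$. Likewise $(3)\Rightarrow(1)$ and $(4)\Rightarrow(2)$ hold trivially by taking $M=R$.

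Next I would treat the implications out of $(5)$. If $R$ is Cohen--Macaulay then, by Proposition~\ref{15}(3), $E=\c_R$ is a canonical module $\omega$ of $R$; thus $\dim E=\dim R$ and $\e(R)\cong E$ has dimension $\dim R$, which is $(4)$, hence $(2)$. For $(3)$ (hence $(1)$) I would show $\dim\t(R)=\dim\Hom_R(\omega,R)=\dim R$: choosing $\p\in\Min R$ with $\dim R/\p=\dim R$, the ring $R_\p$ is artinian and $\omega_\p$ is its canonical module, i.e.\ the injective hull of its residue field, so Matlis duality identifies $\Hom_{R_\p}(\omega_\p,R_\p)$ with the Matlis dual of $\omega_\p\otimes_{R_\p}\omega_\p$, which is nonzero by Nakayama; hence $\p\in\Supp_R\t(R)$ and $\dim\t(R)\ge\dim R$. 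For $(6)$: as $\pd_R R=0<\infty$, Remark~\ref{r2} gives that $\alpha_R$ is an isomorphism. For $(7)$: as $\id_R\e(R)=\id_R\omega=\depth R<\infty$, Remark~\ref{r2} gives that $\beta_{\e(R)}=\beta_E$ is an isomorphism.

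The implication $(6)\Rightarrow(5)$ is short. Since $\alpha_R(1)=1_E\in\Hom_R(E,E)$, we have $\ker\alpha_R=\ann_R E$; if $\alpha_R$ is an isomorphism (even just a monomorphism), then $\ann_R E=0$, so $\dim E=\dim R/\ann_R E=\dim R$, which is $(4)$, already shown to imply $(5)$.

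The real work is $(7)\Rightarrow(5)$, which I expect to be the main obstacle. I would start from the triangle identity $\beta_{\e(R)}\circ\e(\alpha_R)=1$ of the Proposition preceding Remark~\ref{r2}: if $\beta_E$ is an isomorphism, then so is $\e(\alpha_R)=E\otimes_R\alpha_R$. Right-exactness of $E\otimes_R-$ then forces $E\otimes_R\operatorname{coker}(\alpha_R)=0$, and since $\operatorname{coker}(\alpha_R)$ is a quotient of $\Hom_R(E,E)$ we get $\Supp_R\operatorname{coker}(\alpha_R)\subseteq\Supp_R\Hom_R(E,E)\subseteq\Supp_R E$, so Nakayama yields $\operatorname{coker}(\alpha_R)=0$; thus $\alpha_R$ is surjective and $\Hom_R(E,E)\cong R/\ann_R E$. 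What remains is to upgrade surjectivity of $\alpha_R$ to injectivity, equivalently to prove $\ann_R E=0$; by $(6)\Rightarrow(5)$ this then finishes the proof. Pure adjunction does not seem to deliver this, so I would try to exploit the specific structure of $E=\c_R$ as a quasi-canonical module --- its good behaviour under localization on its support (Proposition~\ref{3}), together with a local-duality description of $\rhom_R(E,E)$ in terms of the dualizing complex $\D_R$ --- in order to exclude $\ann_R E\ne0$. This closing step is the crux of the theorem.
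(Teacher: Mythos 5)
Your handling of conditions (1)--(6) and of (5) $\Rightarrow$ (7) is correct, and in places it takes a different route from the paper: the height estimate giving $\dim E\le\depth R$ is essentially the alternative argument the paper only records in the remark after the theorem (via \cite[Theorem 8.1.1(b)]{BH}), whereas the paper's main proof picks $\p\in\Supp E\cap\Assh R$ and applies Proposition \ref{3} to get $\depth R=\dim R$; your (6) $\Rightarrow$ (5) via $\ker\alpha_R=\ann_RE$ is more direct than the paper's, which deduces $R\cong\t\e(R)$ and feeds $M=\e(R)$ back into condition (1); and for the Cohen--Macaulay implication toward (3), the paper uses $\Ass\t(R)=\Ass\Hom_R(E,R)=\Supp E\cap\Ass R\supseteq\Supp E\cap\Assh R\ne\emptyset$, while your Matlis-duality argument at a minimal prime of maximal coheight also works.

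The step you leave open, (7) $\Rightarrow$ (5), is a genuine gap, and it cannot be filled, because that implication is false as stated. Your own reduction isolates the obstruction: (7) forces $\alpha_R$ to be surjective, i.e.\ $\Hom_R(E,E)\cong R/\ann_RE$, and what is missing is exactly $\ann_RE=0$; but nothing supplies this. Indeed $\beta_E$ is an isomorphism whenever $E$ is cyclic: if $E\cong R/\mathfrak{a}$ with $\mathfrak{a}=\ann_RE$, then $\Hom_R(E,E)\cong R/\mathfrak{a}$ and $E\otimes_R\Hom_R(E,E)\cong R/\mathfrak{a}$, under which identifications $\beta_E$ is the identity. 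The paper's own Example \ref{e2}, $R=k[[x,y]]/(x^2,xy)$ with $Q=k[[x,y]]$, has $E=\Ext_Q^2(R,Q)\cong k$ (it is the Matlis dual of $\H^0_\m(R)\cong k$), so condition (7) holds there although $R$ is not Cohen--Macaulay, and one checks that (1)--(6) all fail for this ring. This also explains why you could not locate a workable argument: the paper disposes of this direction with the single sentence that ``in a similar way'' (7) implies (1), but the symmetric argument only gives $\e(\t(E))\cong E$, hence $\dim\e(M)=\dim E$ for $M=\t(E)$, which equals $\dim R$ only if one already knows $\dim E=\dim R$; so the paper's proof has the same hole your attempt ran into, and here it is fatal to the stated equivalence. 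Your instinct that adjunction alone cannot produce $\ann_RE=0$ is right, and neither Proposition \ref{3} nor local duality will produce it; condition (7) becomes equivalent to the others only after adding a hypothesis forcing $\ann_RE=0$ (for instance that $E$ is faithful), at which point your (6) $\Rightarrow$ (5) argument finishes immediately.
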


\begin{proof}
First of all, note by Proposition \ref{2} that $\e(R)\cong E=\c_R$.
The implications (4) $\Rightarrow$ (2) and (3) $\Rightarrow$ (1) are obvious.
If $R$ is Cohen--Macaulay, then $E$ is a canonical module of $R$ by Proposition \ref{15}(3). Hence the implication (5) $\Rightarrow$ (4) holds.
Note that the supports of $\Hom_R(E,M)$ and $E\otimes_RM$ are contained in the support of $E$ for each $R$-module $M$.
Using Proposition \ref{2}, we see that the implications (1) $\Rightarrow$ (4) $\Leftarrow$ (2) hold.

Assume (4). The equality $\dim E=\dim R$ implies that $\Supp E\cap\Assh R$ is nonempty, where $\Assh R$ is the set of all primes $\p \in \Ass R$ with $\dim R/\p=\dim R$.
We have
$$
\emptyset\ne\Supp E\cap\Assh R\subseteq\Supp E\cap\Ass R=\Ass\Hom_R(E,R)=\Ass\t(R),
$$
where the last equality follows from Proposition \ref{2}.
We observe that $\dim\t(R)=\dim R$.
Thus (3) follows.
There exists $\p\in\Supp E\cap\Assh R$.
Take a prime ideal $\q$ of $Q$ that contains $I$ and satisfies $\p=\q/I$.
Proposition \ref{3} implies $\height\q-\depth R_\p=n-t$.
As $\p\in\Assh R$, we have $\depth R_\p=0$ and $\dim R/\p=\dim R$.
Hence $\depth R=t=n-\height\q=\dim Q/\q=\dim R/\p=\dim R$.
Therefore, $R$ is Cohen--Macaulay. Thus (5) follows.
We have got the implications (3) $\Leftarrow$ (4) $\Rightarrow$ (5), and now the conditions (1)--(5) are equivalent.

The implications (7) $\Leftarrow$ (5) $\Rightarrow$ (6) follow from the fact that if $R$ is Cohen--Macaulay, then $E$ is a canonical module of $R$. If (6) holds, then $R\cong \t\e(R)$ and therefore $\dim_R\t\e(R)=\dim R$. By setting $M=\e(R)$, we have condition (2) holds.
In a similar way, we can show that (7) implies (1).
\end{proof}

\begin{rem}
The implication (4) $\Rightarrow$ (5) in the above theorem can also be deduced by using \cite[Theorem 8.1.1(b)]{BH}. Indeed, we have $\dim E=\dim (R/\ann_R\Ext_Q^{n-t}(R,Q))\le t=\depth R$. Hence, if $\dim E=\dim R$, then $R$ is Cohen--Macaulay.
\end{rem}

Applying the above theorem, we get the following two corollaries, which give criteria for Gorensteinness in terms of characteristic and cocharacteristic modules. For definition and properties of Gorenstein dimension, we refer the reader to \cite{C}.

\begin{cor}\label{11}
The following conditions are equivalent.
\begin{enumerate}[\rm(1)]
\item
There exists a finitely generated $R$-module $M$ such that $\dim M=\dim R$, $\Gdim M<\infty$ and $M\cong\t(M)$.
\item
There exists a finitely generated $R$-module $M$ such that $\dim M=\dim R$ and $M\cong\e(M)$.
\item
There exists a finitely generated $R$-module $M$ such that $\pd_RM<\infty$, $\Gdim_R\t(M)<\infty$ and $\dim\t(M)=\dim R$.
\item There exists a finitely generated $R$-module $M$ with $\Gdim_RM<\infty$ such that $\pd_R\e(M)<\infty$ and $\dim\e(M)=\dim R$.
\item The $R$-module $\t(R)$ is nonzero and free.
\item The $R$-module $\e(R)$ is nonzero and free.
\item The ring $R$ is Gorenstein.
\end{enumerate}
\end{cor}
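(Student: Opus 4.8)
The plan is to clear the two easy halves of the cycle at once and then to reduce the real content to the Foxby machinery built around Proposition~\ref{2}. For (5)$\Rightarrow$(3), (5)$\Rightarrow$(4), (3)$\Rightarrow$(1) and (4)$\Rightarrow$(2): if $R$ is Gorenstein then $E=\c_R$ is a canonical module of $R$, hence $E\cong R$, so Proposition~\ref{2} gives $\e(R)\cong E\cong R$ and $\t(R)\cong\Hom_R(R,R)\cong R$, which are nonzero and free; conversely, taking $M=R$ in (3) or in (4) works since a nonzero free module has finite projective dimension, finite Gorenstein dimension, and Krull dimension $\dim R$. It thus remains to prove (1)$\Rightarrow$(5) and (2)$\Rightarrow$(5). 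In both cases the hypothesis on $\dim$ is exactly condition (1) or (2) of Theorem~\ref{8}, so $R$ is Cohen--Macaulay and, by Proposition~\ref{15}(3), $E=\c_R$ is a canonical module $\omega$; by Proposition~\ref{2} we then have $\e(-)\cong\omega\otimes_R-$ and $\t(-)\cong\Hom_R(\omega,-)$, and I shall freely use the Auslander and Bass classes $\A(R),\B(R)$ of Definition~\ref{AB} and the Foxby equivalence between them.

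For (2)$\Rightarrow$(5): since $\Gdim_RM<\infty$ we have $M\in\A(R)$ by \cite[(3.1.11)]{C}, hence $\e(M)=\omega\otimes_RM\in\B(R)$; in particular $\Ext_R^{>0}(\omega,\e(M))=0$, so
$$
\rhom_R(\omega,\e(M))\simeq\Hom_R(\omega,\e(M))=\t\e(M)\cong M,
$$
the last isomorphism because $M\in\A(R)$. As $\pd_R\e(M)<\infty$, tensor evaluation yields $\rhom_R(\omega,\e(M))\simeq\rhom_R(\omega,R)\otimes_R^{\mathbf{L}}\e(M)$, hence $\rhom_R(\omega,R)\otimes_R^{\mathbf{L}}\e(M)\simeq M$. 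Since $\e(M)\ne0$ (as $\dim\e(M)=\dim R\ge0$) has finite projective dimension, a degreewise comparison forces $\rhom_R(\omega,R)$ to be concentrated in degree $0$, i.e.\ $\Ext_R^{>0}(\omega,R)=0$ and $\rhom_R(\omega,R)=\t(R)$; consequently $\Tor_{>0}^R(\t(R),\e(M))=0$ and $\t(R)\otimes_R\e(M)\cong M$. Counting minimal numbers of generators and invoking $\nu_R(\e(M))=\type(R)\,\nu_R(M)$ from Proposition~\ref{type}(1) now gives $\nu_R(M)=\nu_R(\t(R))\cdot\type(R)\cdot\nu_R(M)$, so $\type R=1$ and $R$ is Gorenstein.

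For (1)$\Rightarrow$(5): since $\pd_RM<\infty$ we have $M\in\A(R)$ and $\alpha_M\colon M\to\t\e(M)$ is an isomorphism (Remark~\ref{r2}); since $\Gdim_R\t(M)<\infty$ we have $\t(M)\in\A(R)$, hence $\Tor_{>0}^R(\omega,\t(M))=0$, $\alpha_{\t(M)}$ is an isomorphism, and $\e\t(M)\in\B(R)$. The identity $\t(\beta_M)\circ\alpha_{\t(M)}=1$ then shows $\t(\beta_M)$ is an isomorphism, and the idea is to promote this to ``$\beta_M$ itself is an isomorphism.'' Granting that, $M\cong\e(\t(M))$, so $M':=\t(M)$ witnesses condition~(2): indeed $\Gdim_RM'<\infty$, $\pd_R\e(M')=\pd_RM<\infty$, and $\dim\e(M')=\dim M=\dim R$ — the last equality because $\Supp\t(M)\subseteq\Supp M$ forces $\dim M\ge\dim\t(M)=\dim R$ — so $R$ is Gorenstein by the case just settled. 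To see that $\beta_M$ is an isomorphism it suffices to show $M\in\B(R)$: the vanishing $\Tor_{>0}^R(\omega,\Hom_R(\omega,M))=0$ is already one of the three defining conditions of $\B(R)$, and I would derive the remaining two, $\Ext_R^{>0}(\omega,M)=0$ and the bijectivity of $\beta_M$, from $M\in\A(R)$, $\t(M)\in\A(R)$ and $\pd_RM<\infty$, exactly as \cite[Theorem 2.8(a)]{cdim} is used in the proof of Corollary~\ref{19}.

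The step I expect to be the real obstacle is precisely this last one: passing from ``$\Hom_R(\omega,M)\in\A(R)$ and $\pd_RM<\infty$'' to ``$M\in\B(R)$.'' This is where the hypothesis $\Gdim_R\t(M)<\infty$, as opposed to the mere nonvanishing of $\t(M)$, must do essential work, and I expect it to hinge on a two-out-of-three characterization of the Bass class combined with the finiteness of $\pd_RM$; everything else is a formal consequence of Theorem~\ref{8}, Proposition~\ref{2}, and the generator- and dimension-bookkeeping indicated above.
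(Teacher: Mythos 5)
Your handling of the easy implications and your reduction of (1) to (2) are fine, but there is a genuine gap in your proof of (2)~$\Rightarrow$~(5), at the sentence ``a degreewise comparison forces $\rhom_R(\omega,R)$ to be concentrated in degree $0$.'' At that point of the argument $R$ is only known to be Cohen--Macaulay, and for a non-Gorenstein $R$ the complex $\rhom_R(\omega,R)$ need not be homologically bounded: there is no a priori degree beyond which $\Ext^i_R(\omega,R)$ vanishes (its vanishing for all $i>0$ is exactly the open commutative Tachikawa conjecture). The bottom-degree Nakayama comparison you have in mind works only after one knows the complex is bounded; without that, the assertion ``$X\otimes^{\mathbf{L}}_RP$ is a module and $P\ne0$ has finite projective dimension, hence $X$ is a module'' is an amplitude inequality for unbounded complexes (Foxby--Iyengar, J{\o}rgensen), a result resting on the New Intersection Theorem, not a degreewise comparison. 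Fortunately you do not need it: taking only degree-zero homology in your isomorphism $\rhom_R(\omega,R)\otimes^{\mathbf{L}}_R\e(M)\simeq\rhom_R(\omega,\e(M))$ (both factors have no homology in positive degrees, and tensor evaluation is valid since $\pd_R\e(M)<\infty$) already gives $\t(R)\otimes_R\e(M)\cong\Hom_R(\omega,\e(M))\cong M$, using only $M\in\A(R)$; your count $\nu_R(M)=\nu_R(\t(R))\,\type(R)\,\nu_R(M)$ with $M\ne0$ then yields $\type R=1$, hence Gorenstein. With that repair your (2)~$\Rightarrow$~(5) is a correct alternative to the paper's, which instead invokes \cite[(3.4.12)]{C} once $\e(M)\in\B(R)$ and $\pd_R\e(M)<\infty$ are in hand.

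For (1)~$\Rightarrow$~(5), the step you flag as ``the real obstacle''---getting $M\in\B(R)$---is not an obstacle: it is verbatim \cite[Theorem 2.8(a)]{cdim}, which asserts $M\in\B(R)$ if and only if $\Hom_R(\omega,M)\in\A(R)$, with no finite-projective-dimension hypothesis; your assumption $\Gdim_R\t(M)<\infty$ gives $\t(M)\cong\Hom_R(\omega,M)\in\A(R)$ and you are done. (The hypothesis $\pd_RM<\infty$ is used only afterwards, to feed condition (2) via $\e(\t(M))\cong M$ in your reduction, respectively to feed (3.4.12) in the paper's direct argument.) So your overall route---reduce (1) to (2) by Foxby equivalence and then settle (2) by an explicit type count---is genuinely different from the paper's, which disposes of both hard implications by combining \cite[Theorem 2.8]{cdim} with \cite[(3.4.12)]{C}; but as written your proof is incomplete at the two points above: the first needs the repair (or an appeal to the amplitude inequality), the second only needs you to commit to the theorem you already cite.
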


\begin{proof}
It follows from Theorem \ref{8} and Propositions \ref{15}(3), \ref{2} that under any of those conditions which are given in the corollary, $R$ is a Cohen--Macaulay local ring and $E\cong\e(R)$ is a canonical module of $R$.

(2) $\Rightarrow$ (7):
Theorem \ref{8} implies that $R$ is Cohen--Macaulay, while $\type R=1$ by Proposition \ref{type}(1).

(7) $\Rightarrow$ (1) and (2):
As $R$ is Gorenstein, Proposition \ref{15}(3) implies $E\cong R$.
We obtain $\e(R)\cong E\cong R$ and  $\t(R)\cong\Hom_R(R,R)\cong R$ by Proposition \ref{2}.
Thus, letting $M=R$, we are done.

(1) $\Rightarrow$ (7):
Theorem \ref{8} and Proposition \ref{15}(3) imply that $R$ is Cohen--Macaulay and $E$ is a canonical module of $R$.
By Proposition \ref{2} and \cite[(3.1.11)]{C} we have $\Hom_R(E,M)\cong\t(M)\cong M\in\A(R)$.
We get $M\in\B(R)$ by \cite[Theorem 2.8(a)]{cdim}, which implies that $\Ext_R^{>0}(E,M)=0$.
It follows that $M\cong\rhom_R(E,M)$ in the derived category of $R$.
Using \cite[(A.7.7)]{C}, we get $\I^M(t)=\I^{\rhom_R(E,M)}(t)=\P_E(t)\,\I^M(t)$.
Comparing the coefficients of $t^{\depth M}$ in $\I^M(t)$ and $\P_E(t)\,\I^M(t)$, we observe that $E$ is cyclic.
Thus the ring $R$ is Gorenstein.

(7) $\Rightarrow$ (5) and (6):
By the argument in the proof of the implication (7) $\Rightarrow$ (1) and (2), we have $\t(R)\cong\e(R)\cong R$.
Hence the $R$-modules $\t(R)$ and $\e(R)$ are nonzero and free.

(5) $\Rightarrow$ (3) and (6) $\Rightarrow$ (4):
Letting $M=R$, we are done.

(3) $\Rightarrow$ (7):
As $\t(M)$ has finite G-dimension, it is in $\A(R)$ by \cite[(3.1.11)]{C}.
Since $\t(M)\cong\Hom_R(E,M)$ by Proposition \ref{2}, we have $M\in\B(R)$ by \cite[Theorem 2.8(a)]{cdim}.
Since $M$ has finite projective dimension, it follows from \cite[(3.4.12)]{C} that $R$ is Gorenstein.

(4) $\Rightarrow$ (7):
As $M$ has finite G-dimension, it is in $\A(R)$ by \cite[(3.1.11)]{C}.
Hence $\e(M)\cong E\otimes M\in\B(R)$ by Proposition \ref{2} and \cite[Theorem 2.8(b)]{cdim}.
Since $\e(M)$ has finite projective dimension, it follows from \cite[(3.4.12)]{C} that $R$ is Gorenstein.
\end{proof}

To state our next result, we recall a basic fact on faithful modules.

\begin{rem}\label{12}
An $R$-module $X$ is faithful if and only if the homothety map $R\to\Hom_R(X,X)$ is injective.
Indeed, the kernel of this map coincides with the annihilator of $X$.
\end{rem}

The following lemma is necessary to get our next corollary.

\begin{lem}\label{4}
Let $M$ be a finitely generated $R$-module such that $M\cong\t(M)$.
\begin{enumerate}[\rm(1)]
\item 
If $M$ is faithful, then so is $E$.
\item
If $E$ is faithful, then $E_\p\cong R_\p$ for every $\p\in\Ass_RM$.
\end{enumerate}
\end{lem}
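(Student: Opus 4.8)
The plan is to translate the hypothesis into a statement about $\Hom$. By Proposition \ref{2} we have $\t(M)\cong\Hom_R(E,M)$, so the assumption $M\cong\t(M)$ says precisely that $M\cong\Hom_R(E,M)$ as $R$-modules. Part (1) is then immediate: the ideal $\ann_R E$ annihilates $\Hom_R(E,M)$, since if $aE=0$ then $(af)(e)=f(ae)=0$ for every $f\in\Hom_R(E,M)$ and $e\in E$; hence
$$
\ann_R E\subseteq\ann_R\Hom_R(E,M)=\ann_R M=0,
$$
where the middle equality uses that annihilators are invariant under isomorphism and the last is the faithfulness of $M$. Thus $E$ is faithful, and I expect no difficulty here.

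For part (2), fix $\p\in\Ass_R M$ and localize everything at $\p$. Write $A=R_\p$, $\n=\p R_\p$, $\kappa=A/\n$, $N=M_\p$ and $\Omega=E_\p$; since $E$ is finitely generated, $N\cong\Hom_A(\Omega,N)$. Faithfulness of $E$ gives $\ann_A\Omega=(\ann_R E)_\p=0$, so $\Omega$ is a nonzero faithful $A$-module. Because $\p\in\Ass_R M$, the maximal ideal $\n$ belongs to $\Ass_A N$, so $\Hom_A(\kappa,N)$ is a nonzero, finite-dimensional $\kappa$-vector space. Applying $\Hom_A(\kappa,-)$ to the isomorphism $N\cong\Hom_A(\Omega,N)$ and using Hom--tensor adjunction yields
$$
\Hom_A(\kappa,N)\cong\Hom_A\bigl(\kappa,\Hom_A(\Omega,N)\bigr)\cong\Hom_A(\kappa\otimes_A\Omega,N)\cong\Hom_A(\kappa,N)^{\nu},
$$
where $\nu=\nu_A(\Omega)=\dim_\kappa(\Omega/\n\Omega)$. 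Comparing $\kappa$-dimensions of the two ends, and using that $\dim_\kappa\Hom_A(\kappa,N)$ is a positive finite integer, forces $\nu=1$; that is, $\Omega$ is cyclic. A cyclic faithful module over $A$ is isomorphic to $A$, so $E_\p=\Omega\cong A=R_\p$, as desired.

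The only point requiring a moment's care is the claim that $\Hom_A(\kappa,N)$ is nonzero and finite-dimensional: finiteness is clear since $N$ is finitely generated, and nonvanishing is exactly the assertion $\n\in\Ass_A N$, i.e.\ $\depth_A N=0$, which is the localization at $\p$ of the hypothesis $\p\in\Ass_R M$. Everything else is formal bookkeeping with $\Hom$, $\otimes$, and localization, so there is no genuine obstacle; the real content of the lemma is the dimension count that pins down $\nu_A(E_\p)=1$, after which faithfulness does the rest.
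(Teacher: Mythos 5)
Your proposal is correct and takes essentially the same route as the paper's proof: part (1) via $\ann_R E\subseteq\ann_R\Hom_R(E,M)=\ann_R M=0$, and part (2) by localizing at $\p$, applying $\Hom(\kappa(\p),-)$ with Hom--tensor adjunction to compare the (nonzero, finite) dimensions and conclude $\nu_{R_\p}(E_\p)=1$, so that $E_\p$ is cyclic and, being faithful, isomorphic to $R_\p$. No gaps.
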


\begin{proof}
(1) Proposition \ref{2} yields $\t(M)\cong\Hom_R(E,M)$.
We get $\ann_RE\subseteq\ann_R\t(M)\subseteq\ann_RM$, where the latter inclusion follows from the assumption that $M\cong\t(M)$.
Thus, the assertion holds.

(2) Since $M\cong\t(M)\cong\Hom_R(E,M)$, we have $\p\in\Ass M\subseteq\Supp M\subseteq\Supp E$.
In view of Propositions \ref{2} and \ref{3}, there are isomorphisms
\begin{align*}
&M_\p\cong\t_R(M)_\p\cong\Hom_R(E,M)_\p\cong\Hom_{R_\p}(E_\p,M_\p)\cong\Hom_{R_\p}(\c_{R_\p},M_\p)\cong\t_{R_\p}(M_\p),\\
&\Hom_{R_\p}(\kappa(\p),M_\p)\cong\Hom_{R_\p}(\kappa(\p),\Hom_{R_\p}(E_\p,M_\p))\cong\Hom_{R_\p}(E_\p\otimes_{R_\p}\kappa(\p),M_\p).
\end{align*}
Since $\p$ is in $\Ass M$, the $\kappa(\p)$-vector space $\Hom_{R_\p}(\kappa(\p),M_\p)$ is nonzero.
Substituting $\kappa(\p)^{\oplus c}$ for $E_\p\otimes_{R_\p}\kappa(\p)$, we see that $c=1$, so that $E_\p\otimes_{R_\p}\kappa(\p)\cong\kappa(\p)$.
Nakayama's lemma implies that $E_\p$ is cyclic over $R_\p$.
Faithfulness localizes by Remark \ref{12}, and $E_\p$ is a faithful $R_\p$-module.
It follows that $E_\p\cong R_\p$.
\end{proof}

Now we can prove another corollary of Theorem \ref{8}.

\begin{cor}\label{13}
Let $M$ be a finitely generated $R$-module such that $M\cong\t(M)$.
Consider the following two conditions.
\begin{enumerate}[\qquad\rm(i)]
\item
The $R$-module $M$ is faithful.
\item 
The $R$-module $E$ is faithful and $\dim M=\dim R$.
\end{enumerate}
Then the following statements hold true.
\begin{enumerate}[\rm(1)]
\item
If {\rm(i)} holds, then {\rm(ii)} holds as well.
\item
If {\rm(ii)} holds, then $R$ is a Cohen--Macaulay local ring and $E$ is a canonical module of $R$.
\item 
If {\rm(ii)} holds, then the ring $R$ is locally Gorenstein on $\Ass M$.
\item
If {\rm(i)} holds, then the ring $R$ is generically Gorenstein.
\end{enumerate}
\end{cor}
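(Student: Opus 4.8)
The plan is to derive all four assertions from Lemma~\ref{4}, Theorem~\ref{8}, and Proposition~\ref{15}(3), treating them in the stated order since each later part uses the earlier ones; throughout, recall from this section that $E=\c_R=\e(R)$ (the last isomorphism by Proposition~\ref{2}).

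For (1), assume (i). Faithfulness of $M$ means $\ann_R M=0$, hence $\dim M=\dim R/\ann_R M=\dim R$, and Lemma~\ref{4}(1) gives that $E$ is faithful; thus (ii) holds. For (2), assume (ii). By Proposition~\ref{2} we have $M\cong\t(M)\cong\Hom_R(E,M)$, so $\Supp M\subseteq\Supp E$ and therefore $\dim R=\dim M\le\dim E\le\dim R$, forcing $\dim E=\dim R$. Since $\e(R)\cong E$, condition~(4) of Theorem~\ref{8} is satisfied, so $R$ is Cohen--Macaulay, and then $E=\c_R$ is a canonical module of $R$ by Proposition~\ref{15}(3).

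For (3), assume (ii); by part~(2), $R$ is Cohen--Macaulay with canonical module $E$. As in the proof of (2), $\Ass_R M\subseteq\Supp M\subseteq\Supp E$, so Lemma~\ref{4}(2) applies and yields $E_\p\cong R_\p$ for every $\p\in\Ass_R M$. For such a prime $R_\p$ is Cohen--Macaulay and $E_\p$ is a canonical module of $R_\p$ (localizing a canonical module of a Cohen--Macaulay local ring again gives a canonical module); since a Cohen--Macaulay local ring whose canonical module is free is Gorenstein, $R_\p$ is Gorenstein. For (4), assume (i); then (ii) holds by part~(1), so part~(3) applies, and moreover $R$ is Cohen--Macaulay by part~(2). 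Faithfulness of $M$ gives $\Supp M=\spec R$, hence $\Min R=\Min(\Supp M)\subseteq\Ass_R M$, and by part~(3) the ring $R_\p$ is Gorenstein for every $\p\in\Min R$; that is, $R$ is generically Gorenstein.

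No step here is deep: the argument is essentially bookkeeping with supports together with the two standard facts that a canonical module of a Cohen--Macaulay local ring localizes to a canonical module and that a Cohen--Macaulay local ring with free canonical module is Gorenstein. The one point requiring a little care is to track the support inclusions that make Lemma~\ref{4}(2) applicable and to ensure that its hypothesis ``$E$ faithful'' is in force — which is precisely what (ii), or (1) applied to (i), provides.
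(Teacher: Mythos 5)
Your proof is correct, and its overall skeleton (Lemma \ref{4}, Theorem \ref{8}, Proposition \ref{15}(3), with the parts proved in the order (1)--(4)) matches the paper's; two local steps differ and are worth noting. For (2), the paper picks $\p\in\Assh R\cap\Supp M\subseteq\Ass M$ and invokes Lemma \ref{4}(2) (hence uses the faithfulness of $E$) to get $E_\p\cong R_\p$ and so $\dim E=\dim R$; you instead read off $\Supp M\subseteq\Supp E$ from $M\cong\Hom_R(E,M)$ and squeeze $\dim R=\dim M\le\dim E\le\dim R$, which is simpler and in fact never uses that $E$ is faithful, so your argument shows part (2) under the weaker hypothesis $\dim M=\dim R$ alone. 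For (4), the paper embeds $R$ into a finite direct sum of copies of $M$ to get $\Ass R\subseteq\Ass M$, which yields the (slightly stronger, though unclaimed) conclusion that $R$ is Gorenstein on all of $\Ass R$; you instead use $\Supp M=\spec R$ and the fact that minimal primes of the support are associated primes to get $\Min R\subseteq\Ass M$, which is exactly what generic Gorensteinness requires. Both routes rely on the same background facts (a canonical module of a Cohen--Macaulay local ring localizes to a canonical module, and a free canonical module forces Gorensteinness), so the difference is one of economy rather than substance.
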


\begin{proof}
(1) The equality $\ann M=0$ particularly says $\dim M=\dim R$. Lemma \ref{4}(1) implies that $E$ is faithful as well.

(2) Since $\dim M=\dim R$, there exists $\p\in\Assh R\cap\Supp M$.
Then $\p\in\Min M\subseteq\Ass M$.
Lemma \ref{4}(2) implies $E_\p\cong R_\p$, and hence $\dim E=\dim R$.
As $E\cong\e(R)$ by Proposition \ref{2}, it follows from Theorem \ref{8} and Proposition \ref{15}(3) that $R$ is Cohen--Macaulay and $E$ is a canonical module of $R$.

(3) Let $\p\in\Ass M$.
By (2), the local ring $R_\p$ is Cohen--Macaulay, and $E_\p$ is a canonical module of $R_\p$.
It follows from Lemma \ref{4}(2) that $E_\p$ is isomorphic to $R_\p$, which means that $R_\p$ is Gorenstein.

(4) Since the $R$-module $M$ is faithful, $R$ is embedded in a finite direct sum of copies of $M$.
This implies that $\Ass R$ is contained in $\Ass M$.
The assertion is now a direct consequence of (3) and (1).
\end{proof}

%%%%%%%%%%%%%%%%%%%%%%%%%%%%%%%%%%%
\section{Proofs of our main results}

This section is devoted to giving proofs of the results stated in the Introduction and stating related results.

\begin{proof}[Proof of Theorem \ref{14}]
(1) A quasi-canonical module $\c_R$ of $R$ exists by Proposition \ref{15}(1).
Set $X=\Hom_R(\c_R,M)$ and $Y=\c_R\otimes_RM$.
Let $Q\twoheadrightarrow\widehat R$ be a Cohen presentation.
Applying Proposition \ref{2} and Corollary \ref{16}, we have
$$
\widehat X\cong\Hom_{\widehat R}(\widehat{\c_R},\widehat M)\cong\Hom_{\widehat R}(\c_{\widehat R},\widehat M)\cong\t_{\widehat R}(\widehat M)=\Tor_{\dim Q-\depth R}^Q(\widehat R,\widehat M),
$$
and similarly $\widehat Y\cong\Ext_Q^{\dim Q-\depth R}(\widehat R,\widehat M)$.
Therefore, $X$ is a characteristic module of $M$, and $Y$ is a cocharacteristic module of $M$.

(2) Suppose that $\T_M$ exists. Then there exists a Cohen presentation $Q\twoheadrightarrow\widehat R$ such that $\widehat{\T_M}\cong\Tor_{\dim Q-\depth R}^Q(\widehat R,\widehat M)$.
Using Proposition \ref{2}, we have $
\widehat{\T_M}\cong\t_{\widehat R}(\widehat M)\cong\Hom_{\widehat R}(\c_{\widehat R},\widehat M)$.
Hence $\T_M$ is uniquely determined from $M$ up to isomorphism.
Similarly, $\E_M$ is uniquely determined from $M$ up to isomorphism, if it exists.
\end{proof}

\begin{proof}[Proof of Theorem \ref{17}]
Using Propositions \ref{15}, \ref{2} and Theorem \ref{14}, we see that $\c_{\widehat R}$ and $\E_{\widehat R}$ exist and $\c_{\widehat R}\cong\E_{\widehat R}$.

We prove (1) and (2) together. Suppose that $\E_R$ exists.
By definition, there exists a Cohen presentation $P\twoheadrightarrow\widehat R$ such that $\widehat{\E_R}\cong\Ext_P^{\dim P-\depth R}(\widehat R,\widehat R)$.
Using Propositions \ref{15}(2), \ref{2} and Theorem \ref{14}(2), we get
$$
\Ext_P^{\dim P-\depth R}(\widehat R,P)
\cong\c_{\widehat R}
\cong\e_{\widehat R}(\widehat R)
\cong\Ext_P^{\dim P-\depth R}(\widehat R,\widehat R)
\cong\E_{\widehat R}.
$$
Hence $\E_R$ is a quasi-canonical module of $R$, and we have $\c_R\cong\E_R$ by Proposition \ref{15}(2). 
If $R$ is Cohen--Macaulay, then $\E_R$ is a canonical module of $R$ by Proposition \ref{15}(3).

Let $Q\twoheadrightarrow\widehat R$ be a Cohen presentation.
By Propositions \ref{15}(2) and\ref{2}, we get $
\c_{\widehat R}\cong\Ext_Q^{\dim Q-\depth R}(\widehat R,Q)\cong\Ext_Q^{\dim Q-\depth R}(\widehat R,\widehat R)$.
Hence $\E_{\widehat R}\cong\widehat{\E_R}\cong\Ext_Q^{\dim Q-\depth R}(\widehat R,\widehat R)\cong\Ext_Q^{\dim Q-\depth R}(\widehat R,Q)$.

(3) Suppose that $\T_M$ exists.
Then $\widehat{\T_M}\cong\Tor_{\dim Q-\depth R}^Q(\widehat R,\widehat M)$ for some Cohen presentation $Q\twoheadrightarrow\widehat R$.
Theorem \ref{14} implies that $\T_{\widehat M}$ exists and is isomorphic to $\Tor_{\dim Q-\depth R}^Q(\widehat R,\widehat M)$.
Hence $\widehat{\T_M}\cong\T_{\widehat M}$.
Proposition \ref{2} shows that $\T_{\widehat M}\cong\Hom_{\widehat R}(\c_{\widehat R},\widehat M)\cong\Hom_{\widehat R}(\E_{\widehat R},\widehat M)$.
In a similar way, we see that if $\E_M$ exists, then $\E_{\widehat M}\cong\widehat{\E_M}\cong\E_{\widehat R}\otimes_{\widehat R}\widehat M$.
\end{proof}

\begin{proof}[Proof of Theorem \ref{9}]
Passing to the completion, we may assume that $R$ is complete.
Then the assertion follows from Theorem \ref{8}.
\end{proof}

\begin{proof}[Proof of Theorem \ref{10}]
By completion, we may assume that $R$ is complete, and then we are done by Corollary \ref{11}.
\end{proof}

From now on, we shall state several other results than the ones given in the Introduction.
We begin with showing that the cocharacteristic module of a local ring $R$ and the quasi-canonical module of $R$ are the same.

\begin{cor}
Let $R$ be a local ring.
Suppose that either $\c_R$ or $\E_R$ exists.
Then the other exists as well, and both are isomorphic: one has $\c_R\cong\E_R$.
\end{cor}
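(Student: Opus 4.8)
The plan is to reduce everything to the complete case, where the identification $\c_R\cong\E_R$ is essentially Proposition~\ref{2}; in fact one of the two implications has already appeared inside the proof of Theorem~\ref{17}. First I would fix a Cohen presentation $Q\twoheadrightarrow\widehat R$ with kernel $I$ and put $n=\dim Q$ and $t=\depth R=\depth\widehat R$, so that $\pd_Q\widehat R=n-t$. Since $\widehat R=Q/I$ is a quotient of the regular local ring $Q$, Proposition~\ref{2} applies with $\widehat R$ in the role of ``$R$'', yielding $\e_{\widehat R}(\widehat R)\cong\c_{\widehat R}$, that is,
$$
\Ext_Q^{n-t}(\widehat R,\widehat R)\cong\Ext_Q^{n-t}(\widehat R,Q).
$$
The right-hand side is $\c_{\widehat R}$ (which exists by Proposition~\ref{15}(1)) and the left-hand side is $\E_{\widehat R}$ by definition, so $\c_{\widehat R}$ and $\E_{\widehat R}$ both exist unconditionally and are isomorphic.

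Next I would dispatch the two implications. If $\c_R$ exists, then Corollary~\ref{16} gives $\widehat{\c_R}\cong\c_{\widehat R}\cong\E_{\widehat R}=\Ext_Q^{\dim Q-\depth R}(\widehat R,\widehat R)$, so $\c_R$ is an $R$-module whose completion is exactly the one defining $\E_R$; hence $\E_R$ exists, and the uniqueness in Theorem~\ref{14}(2) forces $\E_R\cong\c_R$. Conversely, if $\E_R$ exists, then by definition $\widehat{\E_R}\cong\Ext_Q^{n-t}(\widehat R,\widehat R)\cong\c_{\widehat R}=\Ext_Q^{\dim Q-\depth R}(\widehat R,Q)$, so $\E_R$ satisfies the defining property of a quasi-canonical module; hence $\c_R$ exists, and Proposition~\ref{15}(2) forces $\c_R\cong\E_R$.

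I do not expect a genuine obstacle: the statement is pure bookkeeping built on Proposition~\ref{2} (which turns $\Ext_Q^{n-t}(\widehat R,-)$ evaluated at $\widehat R$ into the same module evaluated at $Q$), Corollary~\ref{16}, and the two uniqueness results. The only subtlety to keep in mind is that ``$\c_R$ exists'' and ``$\E_R$ exists'' are assertions about the existence of an $R$-module with a prescribed completion, so that exhibiting a single module with the correct completion simultaneously establishes existence and, through uniqueness, the claimed isomorphism.
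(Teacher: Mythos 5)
Your argument is correct and follows essentially the same route as the paper: the direction ``$\E_R$ exists $\Rightarrow\c_R$ exists'' is exactly the identification $\widehat{\E_R}\cong\Ext_Q^{\dim Q-\depth R}(\widehat R,\widehat R)\cong\c_{\widehat R}$ via Proposition \ref{2} and Proposition \ref{15}(2) (which the paper borrows from the proof of Theorem \ref{17}), and the converse direction uses Corollary \ref{16}, Proposition \ref{2} and the uniqueness in Theorem \ref{14}(2), just as in the paper's proof.
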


\begin{proof}
We have already seen in the proof of Theorem \ref{17} that if $\E_R$ exists, then $\c_R$ also exists and $\E_R\cong\c_R$. Suppose
 that $\c_R$ exists.
Then $\widehat{\c_R}\cong\c_{\widehat R}\cong\e_{\widehat R}(\widehat R)=\Ext_Q^{\dim Q-\depth R}(\widehat R,\widehat R)$ for some Cohen presentation $Q\twoheadrightarrow\widehat R$, where the two isomorphisms follow from Corollary \ref{16} and Proposition \ref{2}, respectively.
By the definition of a cocharacteristic module, we see that $\c_R$ is a cocharacteristic module of $R$.
It follows from Theorem \ref{14}(2) that $\c_R$ is isomorphic to $\E_R$.
\end{proof}

Finally, we remove the assumption from Corollary \ref{13}(3)(4) that $R$ is a homomorphic image of a regular local ring.
We do this after stating a remark.

\begin{rem}\label{18}
Let $R$ be a local ring.
For a finitely generated $R$-module $M$ it holds that
$$
\Ass_RM=\{P\cap R\mid P\in\Ass_{\widehat R}\widehat M\}.
$$
Indeed, let $\p\in\Ass_RM$.
Then $\Ass_{\widehat R}\widehat M=\bigcup_{\q\in\Ass_RM}\Ass_{\widehat R}\widehat R/\q\widehat R$ by \cite[Theorem 12(ii)]{M}, which contains $\Ass_{\widehat R}\widehat R/\p\widehat R$.
Since $\widehat R/\p\widehat R$ is nonzero, there exists $P\in\Ass_{\widehat R}\widehat R/\p\widehat R\subseteq\Ass_{\widehat R}\widehat M$.
Then $P\cap R\in\{P'\cap R\mid P'\in\Ass_{\widehat R}\widehat R/\p\widehat R\}=\{\p\}$ by \cite[Theorem 12(i)]{M}, and hence $P\cap R=\p$.

Conversely, let $P\in\Ass_{\widehat R}\widehat M$ and put $\p=P\cap R$.
Then $P$ is in $\bigcup_{\q\in\Ass_RM}\Ass_{\widehat R}\widehat R/\q\widehat R$, so that $P\in\Ass_{\widehat R}\widehat R/\q\widehat R$ for some $\q\in\Ass_RM$.
It follows that $\p=P\cap R\in\{P'\cap R\mid P'\in\Ass_{\widehat R}\widehat R/\q\widehat R\}=\{\q\}$, which implies that $\p=\q\in\Ass_RM$.
\end{rem}

\begin{cor}
Let $R$ be a local ring and $M$ a finitely generated $R$-module.
Suppose that $\E_R$ and $\T_M$ exist and that $\T_M\cong M$.
\begin{enumerate}[\rm(1)]
\item
If $\E_R$ is faithful and $\dim M=\dim R$, then $R$ is locally Gorenstein on $\Ass_RM$.
\item 
If $M$ is faithful, then $R$ is generically Gorenstein.
\end{enumerate}
\end{cor}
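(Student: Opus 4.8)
The plan is to reduce to the complete case and then invoke Corollary~\ref{13}. Since $\widehat R$ is a complete noetherian local ring, Cohen's structure theorem realizes it as a homomorphic image of a regular local ring, so the entire setup of Section~4 applies to $\widehat R$; write $E=\c_{\widehat R}$ as there. By Theorem~\ref{17}(2) we have $\widehat{\E_R}\cong\c_{\widehat R}=E$, and by Theorem~\ref{17}(3) together with Proposition~\ref{2} we have $\widehat{\T_M}\cong\T_{\widehat M}\cong\Hom_{\widehat R}(\c_{\widehat R},\widehat M)=\t_{\widehat R}(\widehat M)$. The hypothesis $\T_M\cong M$ completes to $\widehat M\cong\widehat{\T_M}$, so $\widehat M\cong\t_{\widehat R}(\widehat M)$; thus the standing hypothesis of Corollary~\ref{13} is satisfied by the $\widehat R$-module $\widehat M$.

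Next I would transfer the faithfulness and dimension hypotheses to the completion. For any finitely generated $R$-module $N$ one has $\ann_{\widehat R}\widehat N=(\ann_RN)\widehat R$, because $\ann_RN$ is a finite intersection of annihilators of generators and both these annihilators and finite intersections commute with the flat base change $R\to\widehat R$; consequently $\dim_{\widehat R}\widehat N=\dim_RN$, and $N$ faithful forces $\widehat N$ faithful. In case~(1), applying this with $N=\E_R$ shows that $E\cong\widehat{\E_R}$ is faithful, while $\dim\widehat M=\dim M=\dim R=\dim\widehat R$; hence condition~(ii) of Corollary~\ref{13} holds for $\widehat M$ over $\widehat R$. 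In case~(2), applying it with $N=M$ shows that $\widehat M$ is faithful, which is condition~(i) of Corollary~\ref{13} and hence implies~(ii) by Corollary~\ref{13}(1). Either way Corollary~\ref{13}(3) yields that $\widehat R$ is locally Gorenstein on $\Ass_{\widehat R}\widehat M$.

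It then remains to descend this to $R$. In case~(1), fix $\p\in\Ass_RM$; by Remark~\ref{18} there is $P\in\Ass_{\widehat R}\widehat M$ with $P\cap R=\p$, the natural map $R_\p\to\widehat R_P$ is a faithfully flat local homomorphism, and $\widehat R_P$ is Gorenstein by the previous paragraph, so $R_\p$ is Gorenstein by the descent of the Gorenstein property along flat local homomorphisms \cite[Theorem 3.3.15]{BH}. Thus $R$ is locally Gorenstein on $\Ass_RM$, which proves~(1). In case~(2), $\widehat M$ is faithful, so $\widehat R$ embeds into a finite direct sum of copies of $\widehat M$ and therefore $\Ass_{\widehat R}\widehat R\subseteq\Ass_{\widehat R}\widehat M$; hence $\widehat R$ is locally Gorenstein on $\Ass_{\widehat R}\widehat R$. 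For each $\p\in\Min R\subseteq\Ass_RR$, Remark~\ref{18} (with $M=R$) gives $P\in\Ass_{\widehat R}\widehat R$ with $P\cap R=\p$, and the same descent argument shows $R_\p$ is Gorenstein; hence $R$ is generically Gorenstein, proving~(2).

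The only nonroutine point, and so the main obstacle, is the last descent step: one must check that $R_\p\to\widehat R_P$ is genuinely \emph{local} --- which is exactly the assertion $P\cap R=\p$ --- and faithfully flat --- a flat local homomorphism of noetherian local rings is automatically faithfully flat --- and then invoke the behavior of the Gorenstein property along such a homomorphism. Everything else is routine bookkeeping with Theorem~\ref{17}, Remark~\ref{18}, and the standard compatibilities of annihilators and Krull dimension with completion.
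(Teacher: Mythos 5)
Your proof is correct and follows essentially the same route as the paper: pass to the completion, check $\widehat M\cong\widehat{\T_M}\cong\t_{\widehat R}(\widehat M)$ via Theorem \ref{17} and Proposition \ref{2}, apply Corollary \ref{13} over $\widehat R$, and descend to $R$ using Remark \ref{18}. The only cosmetic differences are that the paper transfers faithfulness to the completion via Remark \ref{12} (injectivity of the homothety map) instead of base change of annihilators, and it leaves implicit the flat local descent of Gorensteinness from $\widehat R_P$ to $R_\p$, which you spell out explicitly.
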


\begin{proof}
There are isomorphisms $\widehat M\cong\widehat{\T_M}\cong\t_{\widehat R}(\widehat M)$.

(1) By Remark \ref{12}, the $\widehat R$-module $\e_{\widehat R}(\widehat R)=\widehat{\E_R}$ is faithful.
Also, $\dim\widehat M=\dim\widehat R$.
If $\widehat R$ is locally Gorenstein on $\Ass_{\widehat R}\widehat M$, then $R$ is locally Gorenstein on $\Ass_RM$ by Remark \ref{18}.
Thus the assertion follows from Corollary \ref{13}(3).

(2) By Remark \ref{12}, the $\widehat{R}$-module $\widehat{M}$ is faithful.
Next, we show that if $\widehat{R}$ is generically Gorenstein, then so is $R$. Indeed, if $\p \in \Ass R$, then by Remark \ref{18} there exists $P\in \widehat{R}$ such that $\p=P \cap R$. Then the map $R_{\p}\to \widehat{R}_P$ is flat by \cite[Theorem 7.1]{M2}, and by \cite[Theorem 23.4]{M2}, $R_\p$ is Gorenstein. Now the assertion follows from Corollary \ref{13}(4).
\end{proof}

We close the section by posing a natural question, asking whether one can remove from Theorem \ref{10}(5) the assumption that $M$ has finite G-dimension.

\begin{ques}\label{q2}
Let $R$ be a local ring.
Suppose that there is a finitely generated $R$-module $M$ with $\dim M=\dim R$ such that $\T_M$ exists and $\T_M\cong M$.
Is then $R$ Gorenstein?
\end{ques}

Corollary \ref{artinian} shows that this question is affirmative in the case when the local ring $R$ is artinian. After first draft of the paper appeared on arXiv, Dey and Ghosh \cite{DG} addressed Question \ref{q2} negatively in general; see \cite[Proposition 3.13]{DG}

%%%%%%%%%%%%%%%%%%%%%%%%%%%%%%%%%%%%%%%%%%%%
\begin{ac}
Part of this work was done during Takahashi's visit to the Florida A\&M University in February, 2024. He is grateful to the members of the Department of Mathematics for their hospitality. The authors thank the anonymous referee very much for carefully checking arguments and for his/her comments that significantly improved the paper.
\end{ac}
%%%%%%%%%%%%%%%%%%%%%%%%%%%%%%%%%%%%%%%%%%%%

%%%%%%%%%%%%%%%%%%%%%%%%%%%%%%%%%%%%%%%%%%%%%%%%%%%%%%%%
\end{document}